\documentclass[12 pt]{amsart}
\usepackage{amssymb, amsmath, amsfonts, amsthm, graphics,mathrsfs, mathtools}
\usepackage[usenames, dvipsnames]{xcolor}
\usepackage[hmargin=1 in, vmargin = 1 in]{geometry}
\usepackage{tikz}
\usetikzlibrary{matrix, calc, arrows, positioning, backgrounds} 
\usepackage{hyperref}

\usepackage{ytableau}



\hyphenation{Grass-mann-ian Grothen-dieck}

\definecolor{darkblue}{rgb}{0.0,0,0.7}

\newcommand{\newword}[1]{\textcolor{darkblue}{\textbf{\emph{#1}}}}





\newcommand{\GL}{\mathrm{GL}}
\newcommand{\Flags}{\mathsf{Flags}}

\newcommand{\id}{\mathrm{id}}



\newcommand{\CC}{\mathbb{C}}





\newcommand{\maj}{\mathrm{maj}}

\newcommand{\plus}{\textcolor{purple}{+}}
\newcommand{\minus}{\textcolor{purple}{-}}
\newcommand{\sign}{\textcolor{purple}{\pm}}
\newcommand{\revsign}{\textcolor{purple}{\mp}}

\newcommand{\Des}{\mathrm{Des}}

\newtheorem{Theorem}{Theorem}[section]

\newtheorem{corollary}[Theorem]{Corollary}

\newtheorem{proposition}[Theorem]{Proposition}

\newtheorem{theorem}[Theorem]{Theorem}

\newtheorem{lemma}[Theorem]{Lemma}

\theoremstyle{definition}

\newtheorem{remark}[Theorem]{Remark}

\numberwithin{equation}{section}

\theoremstyle{remark}

\newenvironment{example}
  {\pushQED{\qed}\examplex}
  {\popQED\endexamplex}

\title{An inverse Grassmannian Littlewood--Richardson rule and extensions}
\author{Oliver Pechenik}
\address[OP]{Department of Combinatorics \& Optimization, University of Waterloo, Waterloo ON N2L3G1, Canada}
\email{oliver.pechenik@uwaterloo.ca}
\author{Anna Weigandt}
\address[AW]{School of Mathematics, University of Minnesota, Minneapolis MN 55455, USA}
\email{weigandt@umn.edu}

\date{\today}
\keywords{Schubert structure coefficient, Schubert polynomial, backstable clan, multiplicity free, Grothendieck polynomial}

\makeatletter
\@namedef{subjclassname@2020}{%
  \textup{2020} Mathematics Subject Classification}
\makeatother

\subjclass[2020]{05E05, 05E14, 14M15, 14N15}

\begin{document}

\begin{abstract}
Chow rings of flag varieties have bases of Schubert cycles $\sigma_u$, indexed by permutations. A major problem of algebraic combinatorics is to give a positive combinatorial formula for the structure constants of this basis. The celebrated Littlewood--Richardson rules solve this problem for special products $\sigma_u \cdot \sigma_v$ where $u$ and $v$ are $p$-Grassmannian permutations.

Building on work of Wyser, we introduce backstable clans to prove such a rule for the problem of computing the product $\sigma_u \cdot \sigma_v$ when $u$ is $p$-inverse Grassmannian and $v$ is $q$-inverse Grassmannian. By establishing several new families of linear relations among structure constants, we further extend this result to obtain a positive combinatorial rule for $\sigma_u \cdot \sigma_v$ in the case that $u$ is covered in weak Bruhat order by a $p$-inverse Grassmannian permutation and $v$ is a $q$-inverse Grassmannian permutation. 
\end{abstract}

\maketitle

\section{Introduction}\label{sec:intro}

The \newword{flag variety} $\Flags_n$ is the parameter space of complete nestings
\[
V_1 \subset V_2 \subset \dots \subset V_n = \mathbb{C}^n
\] of vector subspaces of $\mathbb{C}^n$, where $\dim V_i = i$. The Chow ring $A(\Flags_n)$ of the flag variety has a basis of \emph{Schubert cycles} $\sigma_u$ indexed by permutations $u \in S_n$. One of the major open problems of algebraic combinatorics is to give a positive combinatorial formula for the structure coefficients defined by 
\[
\sigma_u \cdot \sigma_v = \sum_w c_{u,v}^w \sigma_w.
\]
Such rules are currently known only for special classes of permutations. For example, the Littlewood--Richardson rules compute the coefficients in the cases where $u,v,w$ are all $p$-Grassmannian for some common $p$; here, we say $u$ is \newword{$p$-Grassmannian} if $u(i) < u(i+1)$ whenever $i \neq p$. For other special cases with known combinatorial rules, see, e.g., \cite{Monk, Sottile, Purbhoo.Sottile, Knutson.Purbhoo, Wyser, Meszaros.Panova.Postnikov, Buch.Kresch.Purbhoo.Tamvakis, Knutson.Zinn-Justin:1, Huang, Knutson.Zinn-Justin:3}. Much work has been done on extending the combinatorics of Littlewood--Richardson coefficients both to richer algebraic theories and to further families of analogous moduli spaces (see, e.g., \cite{Buch,Knutson.Tao:HT,Thomas.Yong:comin,Buch:twostep,Buch.Samuel,Pechenik.Yong}).  For discussion of the significance of positive combinatorial formulas for these and related numbers, see \cite{Knutson:ICM}.

Building on work of B.~Wyser \cite{Wyser}, we present an inverse Grassmannian analogue of the Littlewood--Richardson rule. Specifically, we solve the problem of giving a positive combinatorial formula for multiplying the Schubert cycles $\sigma_{u^{-1}}$ and $\sigma_{v^{-1}}$ where $u$ is $p$-Grassmannian and $v$ is $q$-Grassmannian. 
Wyser's work solved many instances of these problems, but required several additional technical hypotheses. We handle the remaining cases by embedding them into settings where these additional hypotheses hold. Wyser's approach was to realize $\sigma_{u^{-1}} \cdot \sigma_{v^{-1}}$ as the class of a \emph{Richardson variety} in $\Flags_n$ and to show that under his hypotheses this variety is the closure of an orbit for the action of a $2$-block Levi subgroup of $\GL_n(\CC)$; he then derives his formula from work of M.~Brion \cite{Brion:Korbit} describing the classes of such orbit-closures (\emph{$K$-orbits}). Our extension of Wyser's work, in contrast, involves purely combinatorial tools.

In sharp contrast to the Littlewood--Richardson case, these inverse Grassmannian products turn out to be \newword{multiplicity-free}, that is $c_{u^{-1}, v^{-1}}^w \in \{0,1\}$ for all $w$. Multiplicity-freeness has powerful geometric and combinatorial consequences (e.g., \cite{Brion,Knutson:Mobius,Hamaker.Patrias.Pechenik.Williams,Pechenik.Satriano}); for instance, our results imply that the $K$-theory classes of the corresponding Richardson varieties are determined by their Chow classes.

We now describe our first theorem in more detail.
Say a permutation $u$ is \newword{$p$-inverse Grassmannian} if $u^{-1}$ is $p$-Grassmannian and say $u$ is \newword{inverse Grassmannian} if it is $p$-inverse Grassmannian for some $p$.
Our combinatorial rule extends the rule of Wyser \cite[Theorem~3.10]{Wyser} based on combinatorial objects called \emph{clans}, which were introduced by \cite{Matsuki.Oshima, Yamamoto} in the context of $K$-orbits. Wyser's work provides a positive combinatorial formula for $c_{u, v}^w$ when \begin{itemize}
	\item $u$ and $v \in S_n$ are respectively $p$-inverse Grassmannian and $q$-inverse Grassmannian,
	\item $p+q=n$, and
	\item $u \leq w_0^{(n)}v$ (where $w_0^{(n)} \in S_n$ is the permutation of greatest \emph{Coxeter length} and the comparison is with respect to the \emph{strong Bruhat order}); 
\end{itemize}   
see Theorem~\ref{thm:wyser} for a precise statement of Wyser's theorem. 
To eliminate these technical conditions, we introduce the notion of \emph{backstable} clans by analogy with the backstable Schubert calculus of \cite{Lam.Lee.Shimozono}. We believe that backstable clans will additionally be amenable to the study of backstabilized $K$-orbits in infinite flag varieties; however, we do not pursue that application here. 

For inverse Grassmannian permutations $u,v$, we associate a backstable clan $\gamma_{u,v}$. (When Wyser's technical conditions hold, this backstable clan becomes the ordinary clan that he studies.) We also define a (backstable) \emph{rainbow clan} $\Omega_{p,q}$ associated to any pair of integers $p,q$. Finally, we need an action of the \emph{$0$-Hecke algebra} on backstable clans, denoted by ``$\cdot$''; we write $T_w$ for the element of the $0$-Hecke algebra corresponding to the permutation $w$.  All of these notions are defined precisely in Section~\ref{sec:background}. With these definitions, we have the following first main theorem, which we derive from Wyser's formula via stabilization arguments.

\begin{theorem}\label{thm:conj1}
 Let $u,v \in S_n$ be permutations, where
	 $u$ is $p$-inverse Grassmannian and $v$ is $q$-inverse Grassmannian. Then the product $\sigma_u \cdot \sigma_v \in A(\Flags_n)$ is a multiplicity-free sum of Schubert cycles. Precisely,
	 \[
	 \sigma_u \cdot \sigma_v = \sum_{w\in S_n} c_{u,v}^w \sigma_w,
	 \]
	 where \[
	 c_{u,v}^w = \begin{cases}
	 	1, & \text{if and only if } \ell(w)=\ell(u)+\ell(v) \text{ and }
	 T_w \cdot \gamma_{u,v} = \Omega_{p,q}; \\
	 0, & \text{otherwise}.
	 \end{cases} \]
\end{theorem}

For illustrations of the use of Theorem~\ref{thm:conj1}, see Examples~\ref{ex:231} and~\ref{ex:213-312}.

Our second main theorem uses analogous combinatorics to provide a positive combinatorial formula for a related class of products.
Let $s_i \in S_n$ denote the permutation that transposes $i$ and $i+1$.
Say that a permutation is \newword{subjacent} if it is of the form $s_p w$ for some $p$-inverse Grassmannian permutation $w \neq \id$.
Building on Theorem~\ref{thm:conj1}, we establish the following positive combinatorial rule for multiplying an inverse Grassmannian Schubert cycle by a subjacent Schubert cycle. Remarkably, such products are also multiplicity-free.

\begin{theorem}\label{thm:conj2}
Let $u,v \in S_n$ be permutations, where
	 $u\neq \id$ is $p$-inverse Grassmannian and $v$ is $q$-inverse Grassmannian. 
	 Then the product $\sigma_{s_p u} \cdot \sigma_v \in A(\Flags_n)$ is a multiplicity-free sum of Schubert cycles. Specifically,
	 \[
	 \sigma_{s_p u} \cdot \sigma_v = \sum_{w\in S_n} c_{s_p u,v}^w \sigma_w,
	 \]
	 where \[
	 c_{s_p u,v}^w = \begin{cases}
	 	1, & \text{if and only if } \ell(w)=\ell(u)+\ell(v)-1 \text{ and }
	 T_w \cdot \gamma_{u,v} \in \Psi_{p,q}; \\
	 0, & \text{otherwise};
	 \end{cases} \]
	 where $\Psi_{p,q}$ denotes a set of \emph{almost rainbow clans} defined precisely in Section~\ref{sec:background}.
\end{theorem}

For an illustration of the use of Theorem~\ref{thm:conj2}, see Example~\ref{ex:conj2}. Note that the product of two subjacent Schubert cycles is not generally multiplicity-free (see Example~\ref{ex:prodsubjacent}), so the multiplicity-freeness of Theorems~\ref{thm:conj1} and~\ref{thm:conj2} does not generalize.

Our main tools for deriving Theorem~\ref{thm:conj2} from Theorem~\ref{thm:conj1} are new families of linear relations among Schubert structure coefficients that we establish in  Propositions~\ref{prop:wlr} and~\ref{prop:stable}. We suspect that these linear relations have further consequences, which we briefly explore in Section~\ref{sec:WLR}.

This paper is organized as follows. In Section~\ref{sec:background}, we recall necessary background on permutations and Schubert polynomials, and we initiate a theory of backstable clans. In Section~\ref{sec:proof11}, we derive Theorem~\ref{thm:conj1} from a theorem of Wyser, together with the new notion of backstable clans and a well-known stabilization technique. In Section~\ref{sec:WLR}, we establish new linear relations among Schubert structure coefficients, together with $K$-theoretic analogues and various corollaries. In particular, the families of linear relations from Propositions~\ref{prop:wlr} and~\ref{prop:stable} will be key in our proof of Theorem~\ref{thm:conj2}. Section~\ref{sec:proof12} contains the proof of Theorem~\ref{thm:conj2} and related remarks.

\section{Preliminaries}\label{sec:background}

\subsection{Permutations}

We write $[n]$ for the set $\{1, 2, \dots, n\}$.

Let $S_\mathbb{Z}$ denote the group of permutations of $\mathbb{Z}$ that fix all but finitely many elements. For $i \in \mathbb{Z}$, the \newword{simple transposition} $s_i \in S_\mathbb{Z}$ is the involution that switches $i$ and $i+1$. Note that $S_\mathbb{Z}$ is generated by $\{s_i\}_{i \in \mathbb{Z}}$.  The \newword{(Coxeter) length} $\ell(w)$ of $w \in S_\mathbb{Z}$ is the length of a minimal expression for $w$ as a product of simple transpositions.

We write $S_+$ for the subgroup generated by $\{s_i\}_{i > 0}$ and write $S_n$ for the subgroup generated by $\{s_i\}_{0 < i < n}$.
For a permutation $w \in S_n$, we often write $w$ in one-line notation as $w(1)w(2) \dots w(n)$.  If we write $w$ in one-line notation, then $ws_i$ is obtained by swapping the entries in positions $i$ and $i+1$; $s_iw$ is obtained by swapping the letters $i$ and $i+1$.  The \newword{long element} $w_0^{(n)} \in S_n$ is $n (n-1) \dots 1$. The inclusion map $\iota : S_n \to S_{n+1}$ sends $w$ to $w(1)w(2)\cdots w(n)(n+1)$.

\newword{Left weak order} on permutations is defined by $u \leq_L w$ if $w = vu$ for some permutation $v$ with $ \ell(u) + \ell(v) = \ell(w)$. Similarly, in this case, we write $v \leq_R w$ and call this the \newword{right weak order}. Let $t_{i,j} \in S_{\mathbb{Z}}$ denote the involution swapping $i$ and $j$. \newword{Bruhat order} is the transitive closure of the covering relations $wt_{i,j} \lessdot w$ for $\ell(wt_{i,j}) = \ell(w) - 1$. We write Bruhat order comparisons as $u \leq w$, without subscripts. The weak orders are weak in the sense that the corresponding relations are subsets of the Bruhat order relation.

For a permutation $w \in S_\mathbb{Z}$, say that $i$ is a \newword{(right) descent} of $w$ if $w(i) > w(i+1)$, equivalently if $ws_i < w$.
Say that $i$ is a \newword{left descent} of $w \in S_\mathbb{Z}$ if $w^{-1}(i) > w^{-1}(i+1)$, equivalently if $s_i w < w$. 
The \newword{Lehmer code} of a permutation $w \in S_\mathbb{Z}$ is the function $c(w) : \mathbb{Z} \to \mathbb{Z}_{\geq 0}$  such that $c(w)(i)$ equals the number of $j > i$ such that $w(j) < w(i)$; as a shorthand, we often write $c_i(w) = c(w)(i)$.

A permutation $w \in S_\mathbb{Z}$ is \newword{$k$-Grassmannian} if $k$ is its unique descent or if it has no descent. Note that the identity permutation is $k$-Grassmannian for all $k$. We say $w$ is \newword{Grassmannian} if it is $k$-Grassmannian for some $k$. We say that $w$ is \newword{$k$-inverse Grassmannian} (resp.\ \newword{inverse Grassmannian}) if $w^{-1}$ is $k$-Grassmannian (resp.\ Grassmannian).

The \newword{$0$-Hecke algebra} $\mathcal{H}_\mathbb{Z}$ has generators $T_i$ for $i \in \mathbb{Z}$ satisfying 
\begin{align}\label{eq:braid}
\begin{split}
	 T_i^2 &= T_i, \\ 
	 T_i T_j &= T_j T_i \quad \text{(if $|i-j| > 1$), and} \\
	 T_i T_{i+1} T_i &= T_{i+1} T_i T_{i+1}. 
\end{split}
\end{align}
For every $w \in S_\mathbb{Z}$ of length $k$ there is a corresponding element $T_w \in \mathcal{H}_\mathbb{Z}$ obtained by taking any reduced decomposition $w = s_{i_1} \cdots s_{i_k}$ and setting $T_w = T_{i_1} \cdots T_{i_k}$. 
The elements $T_w$ for $w \in S_\mathbb{Z}$ are a linear basis of $\mathcal{H}_\mathbb{Z}$.

\subsection{Schubert polynomials}
\newword{Schubert polynomials} are defined recursively as follows. For $w_0^{(n)} \in S_n$, set the Schubert polynomial $\mathfrak{S}_{w_0^{(n)}} = x_1^{n-1} x_2^{n-2} \cdots x_n^0$. For $w$ such that $ws_i < w$, set 
\[
\mathfrak{S}_{ws_i} = N_i \mathfrak{S}_w,
\]
where $N_i$ is the \newword{(Newton) divided difference operator} that acts on $f \in \mathbb{Z}[x_1, \dots, x_n]$ by
\[
N_i(f)= \frac{f - s_i \cdot f}{x_i - x_{i+1}}.
\]
Here, $s_i$ acts on a polynomial by swapping variables $x_i$ and $x_{i+1}$. 

We have $\mathfrak{S}_w = \mathfrak{S}_{\iota(w)}$, so we may treat Schubert polynomials as indexed by the elements of $S_+$. The set of Schubert polynomials $\{\mathfrak{S}_w \}_{w \in S_+}$ is a linear basis of the free $\mathbb{Z}$-module $\mathbb{Z}[x_1, x_2, x_3, \dots]$. In particular, there are structure coefficients defined by 
\[
\mathfrak{S}_u \cdot \mathfrak{S}_v = \sum_w d_{u,v}^w \mathfrak{S}_w.
\]
For $u,v,w \in S_n$, these structure coefficients agree with the Schubert structure coefficients defined in Section~\ref{sec:intro}, i.e., $c_{u,v}^w = d_{u,v}^w$. Hence, we can study the structure coefficients $c_{u,v}^w$ by using Schubert polynomials in place of Schubert cycles. References for basic facts about Schubert polynomials include \cite{Macdonald:notes, Manivel}.

\subsection{Backstable clans}

A \newword{backstable clan} is a partial matching of the integers such that there exist $i,j \in \mathbb{Z}$ such that $i - k$ is paired with $j+k$ for all $k > 0$, together with an assignment of labels from $\{ \plus , \minus \}$ to the unmatched integers. We say that such a backstable clan $\gamma$ is \newword{supported} on $[i,j]$ and call $\gamma$ an \newword{$[i,j]$-clan}. (Note that if $\gamma$ is supported on $[i,j]$, then $\gamma$ is also supported on $[i-a,j+a]$ for any $a \geq 0$.) When we draw diagrams to illustrate an $[i,j]$-clan $\gamma$, we often restrict to the interval $[i,j]$, since all information about $\gamma$ can be extracted from this finite region. For examples of backstable clans, see Figures~\ref{fig:rainbow}, \ref{fig:heck_action}, and \ref{fig:almost_rainbow}.

In the previous literature, ``clans'' are restricted to the interval $[1,n]$; we identify these objects with $[1,n]$-clans. Clans were introduced in \cite{Matsuki.Oshima, Yamamoto} in the context of \emph{$K$-orbits}. For more recent work using clans in a related $K$-orbit context, see, e.g., \cite{Wyser.Yong,Woo.Wyser}. We believe that backstable clans will additionally be amenable to the study of backstabilized $K$-orbits, analogous to the backstable Schubert calculus of \cite{Lam.Lee.Shimozono}; however, we do not pursue that application here. In this paper, backstable clans are a tool for explicating the Schubert calculus of $\Flags_n$.

For a backstable clan $\gamma$, let $\zeta(\gamma)$ denote the number of $\plus$ labels minus the number of $\minus$ labels. If $\gamma$ is supported on $[i,j]$ and $\zeta(\gamma) = \zeta$, we say that $\gamma$ is a \newword{$(p,q)$-clan}, where $p = \frac{1}{2} (i+j+\zeta-1)$ and $q = \frac{1}{2}(i+j - \zeta-1)$. Note that $\zeta = p-q$ and $i+j-1 = p+q$; moreover, a backstable clan $\gamma$ supported on $[i,j]$ with $\zeta(\gamma) = p-q$ and $i+j-1 = p+q$ is a $(p,q)$-clan.

For a backstable clan $\gamma$, we write $\gamma(i) = j$ if $i$ is matched with $j$ in $\gamma$. 
Write $\sign$ to denote an unspecified element of $\{ \plus, \minus \}$. 
We write $\gamma(i) = \sign$ if $i$ is unmatched in $\gamma$ and labeled with $\sign \in \{\plus, \minus \}$.
If $i \in \mathbb{Z}$ is matched, we say that $i$ is \newword{initial} if $\gamma(i) > i$ and \newword{final} if $\gamma(i) < i$. A backstable clan $\gamma$ is \newword{noncrossing} if we never have $a<b<c<d \in \mathbb{Z}$ with $\gamma(a) = c$ and $\gamma(b) = d$.

For each pair of integers $p,q \in \mathbb{Z}$, the \newword{rainbow clan} $\Omega_{p,q}$ is the $(p,q)$-clan  such that
\[
\Omega_{p,q}(i) = \begin{cases}
	 \plus, & \text{if $i \in [q+1,p]$;} \\
	 \minus, & \text{if $i \in [p+1, q]$;} \\
	 p+q+1-i, & \text{otherwise}.\\
\end{cases}
\]
See Figure~\ref{fig:rainbow} for some examples. Note that the rainbow clan is always noncrossing and never has both $\plus$ and $\minus$ appearing.

%
%

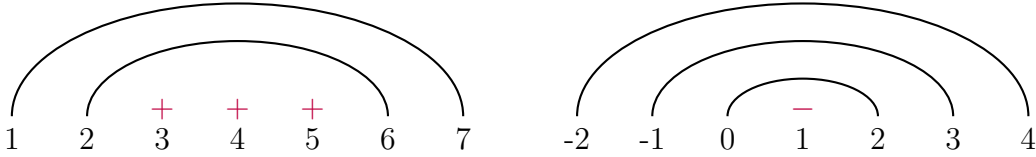
\begin{figure}[h]
\begin{center}
\begin{tikzpicture}
	\foreach \i in {1,...,7}
	{
	\node (\i) at (\i,0) {\i };	
	}
    \draw[thick] (7,0.3) arc
    [
        start angle=0,
        end angle=180,
        x radius=3cm,
        y radius =1.5cm
    ] ;
    \draw[thick] (6,0.3) arc
    [
        start angle=0,
        end angle=180,
        x radius=2cm,
        y radius =1cm
    ] ;
	\node (3lab) at (3,0.4) {$\plus$};
	\node (4lab) at (4,0.4) {$\plus$};
	\node (5lab) at (5,0.4) {$\plus$};
\end{tikzpicture}
\quad \quad
\begin{tikzpicture}
	\foreach \i in {-2,...,4}
	{
	\node (\i) at (\i+3,0) {\i };	
	}
    \draw[thick] (7,0.3) arc
    [
        start angle=0,
        end angle=180,
        x radius=3cm,
        y radius =1.5cm
    ] ;
    \draw[thick] (6,0.3) arc
    [
        start angle=0,
        end angle=180,
        x radius=2cm,
        y radius =1cm
    ] ;
    \draw[thick] (5,0.3) arc
    [
        start angle=0,
        end angle=180,
        x radius=1cm,
        y radius =0.5cm
    ] ;
	\node (4lab) at (4,0.4) {$\minus$};
\end{tikzpicture}
\end{center}
\caption{The rainbow clans $\Omega_{5,2}$ (left) and $\Omega_{0,1}$ (right).}
\label{fig:rainbow}
\end{figure}

For each generator $T_i$ of the $0$-Hecke algebra $\mathcal{H}_\mathbb{Z}$, we define an action of $T_i$ on $(p,q)$-clans. This action is defined through various cases; however, all have the flavor of acting locally at the numbers $i$ and $i+1$ and of transforming the $(p,q)$-clan to more closely resemble the rainbow clan $\Omega_{p,q}$. Precisely, for a clan $\gamma$, we have:
\begin{itemize}
	\item if $\gamma(i) = \sign$ and $i+1$ is initial, then \[
	(T_i \cdot \gamma)(i) = \gamma(i+1), (T_i \cdot \gamma)(\gamma(i+1))=i, \text{ and } (T_i \cdot \gamma)(i+1) = \sign;
	\]
	\item if $i$ is final and $\gamma(i+1) = \sign$, then 
	\[
	(T_i \cdot \gamma)(i) = \sign, (T_i \cdot \gamma)(\gamma(i))=i+1, \text{ and } (T_i \cdot \gamma)(i+1) = \gamma(i);
	\]
	\item if $i$ and $i+1$ are both initial with $\gamma(i) < \gamma(i+1)$, then \[
	(T_i \cdot \gamma)(i) = \gamma(i +1), (T_i \cdot \gamma)(\gamma(i +1)) = i, (T_i \cdot \gamma)(i+1) = \gamma(i), \text{ and } (T_i \cdot \gamma)(\gamma(i)) = i+1;
	\]
	\item if $i$ and $i+1$ are both final with $\gamma(i) < \gamma(i+1)$, then \[
	(T_i \cdot \gamma)(i) = \gamma(i +1), (T_i \cdot \gamma)(\gamma(i +1)) = i, (T_i \cdot \gamma)(i+1) = \gamma(i), \text{ and } (T_i \cdot \gamma)(\gamma(i)) = i+1;
	\]
	\item if $i$ is final and $i+1$ is initial, then 
	\[
	(T_i \cdot \gamma)(i) = \gamma(i +1), (T_i \cdot \gamma)(\gamma(i +1)) = i, (T_i \cdot \gamma)(i+1) = \gamma(i), \text{ and } (T_i \cdot \gamma)(\gamma(i)) = i+1;
	\]
	\item if $\gamma(i) = \sign$ and $\gamma(i+1) = \revsign$, then $(T_i \cdot \gamma)(i) = i+1$ and $(T_i \cdot \gamma)(i+1) = i$;
\end{itemize}
in all other cases, $T_i$ acts trivially. Since this action respects the braid relations of Equation~\eqref{eq:braid} by \cite[p.\ 839]{Wyser}, we obtain an action of each $0$-Hecke element $T_w$. (Wyser only considers $(p,q)$-clans supported on $[1,p+q]$; however, by translating $[i,j]$-clans to be supported on $[1,j-i+1]$, the general result is immediate.) Examples of the $0$-Hecke action on backstable clans are shown in Figure~\ref{fig:heck_action}.

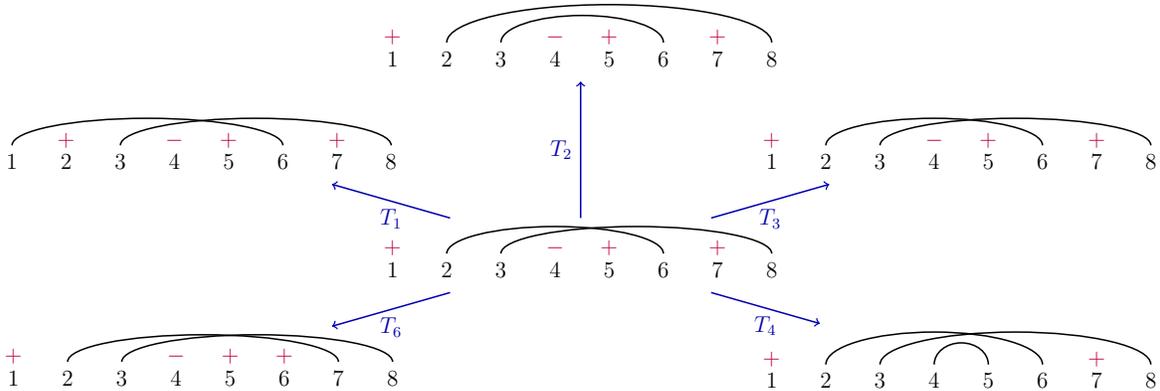
\begin{figure}[htb]
	\begin{center}
	\scalebox{0.72}{
\begin{tikzpicture}
	\node (center) at (0,0) {
		\begin{tikzpicture}
				\foreach \i in {1,...,8}
	{
	\node (\i) at (\i,0) {\i };	
	}
    \draw[thick] (6,0.3) arc
    [
        start angle=0,
        end angle=180,
        x radius=2cm,
        y radius =0.5cm
    ] ;
    \draw[thick] (8,0.3) arc
    [
        start angle=0,
        end angle=180,
        x radius=2.5cm,
        y radius =0.5cm
    ] ;
	\node (1lab) at (1,0.4) {$\plus$};
	\node (4lab) at (4,0.4) {$\minus$};
	\node (5lab) at (5,0.4) {$\plus$};
	\node (7lab) at (7,0.4) {$\plus$};
		\end{tikzpicture}};
	\node (s1) at (-7,2) {
		\begin{tikzpicture}
				\foreach \i in {1,...,8}
	{
	\node (\i) at (\i,0) {\i };	
	}
    \draw[thick] (6,0.3) arc
    [
        start angle=0,
        end angle=180,
        x radius=2.5cm,
        y radius =0.5cm
    ] ;
    \draw[thick] (8,0.3) arc
    [
        start angle=0,
        end angle=180,
        x radius=2.5cm,
        y radius =0.5cm
    ] ;
	\node (2lab) at (2,0.4) {$\plus$};
	\node (4lab) at (4,0.4) {$\minus$};
	\node (5lab) at (5,0.4) {$\plus$};
	\node (7lab) at (7,0.4) {$\plus$};
		\end{tikzpicture}};
    \node (s2) at (0,4) {
		\begin{tikzpicture}
				\foreach \i in {1,...,8}
	{
	\node (\i) at (\i,0) {\i };	
	}
    \draw[thick] (6,0.3) arc
    [
        start angle=0,
        end angle=180,
        x radius=1.5cm,
        y radius =0.5cm
    ] ;
    \draw[thick] (8,0.3) arc
    [
        start angle=0,
        end angle=180,
        x radius=3.0cm,
        y radius =0.7cm
    ] ;
	\node (1lab) at (1,0.4) {$\plus$};
	\node (4lab) at (4,0.4) {$\minus$};
	\node (5lab) at (5,0.4) {$\plus$};
	\node (7lab) at (7,0.4) {$\plus$};
		\end{tikzpicture}};
	\node (s3) at (7,2) {
		\begin{tikzpicture}
				\foreach \i in {1,...,8}
	{
	\node (\i) at (\i,0) {\i };	
	}
    \draw[thick] (6,0.3) arc
    [
        start angle=0,
        end angle=180,
        x radius=2cm,
        y radius =0.5cm
    ] ;
    \draw[thick] (8,0.3) arc
    [
        start angle=0,
        end angle=180,
        x radius=2.5cm,
        y radius =0.5cm
    ] ;
	\node (1lab) at (1,0.4) {$\plus$};
	\node (4lab) at (4,0.4) {$\minus$};
	\node (5lab) at (5,0.4) {$\plus$};
	\node (7lab) at (7,0.4) {$\plus$};
		\end{tikzpicture}};
	\node (s4) at (7,-2) {
		\begin{tikzpicture}
				\foreach \i in {1,...,8}
	{
	\node (\i) at (\i,0) {\i };	
	}
    \draw[thick] (6,0.3) arc
    [
        start angle=0,
        end angle=180,
        x radius=2cm,
        y radius =0.6cm
    ] ;
    \draw[thick] (8,0.3) arc
    [
        start angle=0,
        end angle=180,
        x radius=2.5cm,
        y radius =0.6cm
    ] ;
    \draw[thick] (5,0.3) arc
    [
        start angle=0,
        end angle=180,
        x radius=0.5cm,
        y radius =0.4cm
    ] ;
	\node (1lab) at (1,0.4) {$\plus$};
	\node (7lab) at (7,0.4) {$\plus$};
		\end{tikzpicture}};
	\node (s6) at (-7,-2) {
		\begin{tikzpicture}
				\foreach \i in {1,...,8}
	{
	\node (\i) at (\i,0) {\i };	
	}
    \draw[thick] (7,0.3) arc
    [
        start angle=0,
        end angle=180,
        x radius=2.5cm,
        y radius =0.5cm
    ] ;
    \draw[thick] (8,0.3) arc
    [
        start angle=0,
        end angle=180,
        x radius=2.5cm,
        y radius =0.5cm
    ] ;
	\node (1lab) at (1,0.4) {$\plus$};
	\node (4lab) at (4,0.4) {$\minus$};
	\node (5lab) at (5,0.4) {$\plus$};
	\node (6lab) at (6,0.4) {$\plus$};
		\end{tikzpicture}};
	\draw[->,thick,darkblue] (center) -- node[below]{$T_1$} (s1) ;
	\draw[->,thick,darkblue] (center) -- node[left]{$T_2$} (s2) ;
	\draw[->,thick,darkblue] (center) -- node[below]{$T_3$} (s3) ;
	\draw[->,thick,darkblue] (center) -- node[below]{$T_4$} (s4) ;
	\draw[->,thick,darkblue] (center) -- node[below]{$T_6$} (s6) ;
\end{tikzpicture}}
	\end{center}
	\caption{Some examples of the Hecke action on clans.}
	\label{fig:heck_action}
\end{figure}

We say that a $(p,q)$-clan $\gamma$ is an \newword{almost rainbow clan} if $T_i \cdot \gamma = \Omega_{p,q}$ for at least one $i \in \mathbb{Z}$. Examples of almost rainbow clans are depicted in Figure~\ref{fig:almost_rainbow}. Write $\omega_{p}$ for the almost rainbow $(p,p)$-clan with $\omega_p(p) = \minus$ and $\omega_p(p+1) = \plus$. For example, $\omega_3$ is illustrated in the center of Figure~\ref{fig:almost_rainbow}.

\begin{figure}[htb]
\scalebox{0.75}{
\begin{tikzpicture}
	\node (crossy) at (-7,0) {\begin{tikzpicture}
				\foreach \i in {3,...,9}
	{
	\node (\i) at (\i - 2,0) {\i };	
	}
    \draw[thick] (7,0.3) arc
    [
        start angle=0,
        end angle=180,
        x radius=3cm,
        y radius =0.8cm
    ] ;
    \draw[thick] (6,0.3) arc
    [
        start angle=0,
        end angle=180,
        x radius=1.5cm,
        y radius =0.5cm
    ] ;
    \draw[thick] (5,0.3) arc
    [
        start angle=0,
        end angle=180,
        x radius=1.5cm,
        y radius =0.5cm
    ] ;
	\node (4lab) at (4,0.4) {$\plus$};
		\end{tikzpicture}};
	\node (plusminus) at (0,0) {\begin{tikzpicture}
				\foreach \i in {1,...,6}
	{
	\node (\i) at (\i,0) {\i };	
	}
    \draw[thick] (6,0.3) arc
    [
        start angle=0,
        end angle=180,
        x radius=2.5cm,
        y radius =0.8cm
    ] ;
    \draw[thick] (5,0.3) arc
    [
        start angle=0,
        end angle=180,
        x radius=1.5cm,
        y radius =0.5cm
    ] ;
	\node (3lab) at (3,0.4) {$\minus$};
	\node (4lab) at (4,0.4) {$\plus$};
		\end{tikzpicture}};
	\node (shwoop) at (7,0) {\begin{tikzpicture}
				\foreach \i in {-4,...,2}
	{
	\node (\i) at (\i+5,0) {\i };	
	}
    \draw[thick] (7,0.3) arc
    [
        start angle=0,
        end angle=180,
        x radius=3cm,
        y radius =0.8cm
    ] ;
    \draw[thick] (5,0.3) arc
    [
        start angle=0,
        end angle=180,
        x radius=1.5cm,
        y radius =0.5cm
    ] ;
	\node (3lab) at (3,0.4) {$\minus$};
	\node (4lab) at (4,0.4) {$\minus$};
	\node (6lab) at (6,0.4) {$\minus$};
		\end{tikzpicture}};
	\end{tikzpicture}}
	\caption{Some representative almost rainbow clans.}
	\label{fig:almost_rainbow}
\end{figure}
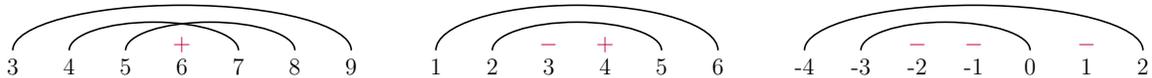

Suppose that $u \in S_\mathbb{Z}$ is $p$-inverse Grassmannian and $v \in S_\mathbb{Z}$ is $q$-inverse Grassmannian. We define a noncrossing backstable clan $\gamma_{u,v}$ associated to the pair $(u,v)$. Write $\hat{p} = p + \frac{1}{2}$ and $\hat{q}=q + \frac{1}{2}$. Then $\gamma_{u,v}$ is the unique noncrossing $(p,q)$-clan such that 
\begin{enumerate}
	\item if $u(i) < \hat{p}$ and $v(i) < \hat{q}$, then $i$ is initial; 	\item if $u(i) > \hat{p}$ and $v(i) > \hat{q}$, then $i$ is final; 	\item if $u(i) < \hat{p}$ and $v(i) > \hat{q}$,  then $\gamma_{u,v}(i) = \plus$;
and
	\item if $u(i) > \hat{p}$ and $v(i) < \hat{q}$, then $\gamma_{u,v}(i) = \minus$.
\end{enumerate}

See Figure~\ref{fig:uv_clan} for an example of this construction.

\begin{figure}[htb]
	\begin{center}
		\begin{tikzpicture}
				\foreach \i in {1,...,5}
	{
	\node (\i) at (\i,0) {\i };	
	}
    \draw[thick] (3,0.3) arc
    [
        start angle=0,
        end angle=180,
        x radius=1cm,
        y radius =0.5cm
    ] ;
    \draw[thick] (5,0.3) arc
    [
        start angle=0,
        end angle=180,
        x radius=0.5cm,
        y radius =0.5cm
    ] ;
	\node (2lab) at (2,0.4) {$\plus$};
		\end{tikzpicture}
	\end{center}
	\caption{The clan $\gamma_{u,v}$ for $u = 12435$ and $v = 13425$. Here, $p = 3$, $q = 2$, $\hat{p} = 3.5$, and $\hat{q} = 2.5$.}
	\label{fig:uv_clan}
\end{figure}
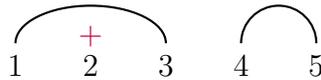

\begin{lemma}\label{lem:gamma_exists}
	The backstable clan $\gamma_{u,v}$ is well-defined.
\end{lemma}
\begin{proof}
	By assumption, there exists $N>0$ such that $u(i) = i$ and $v(i) = i$ for $i \geq N$ and for $i \leq -N$. Therefore, all $i<-N$ are initial and all $i>N$ are final. Since these sets are then both countably infinite, there is a unique noncrossing way to pair them up. Thus, $\gamma_{u,v}$ is a backstable clan; it remains to show that it is a $(p,q)$-clan.
	
	Choose an interval $[i,j]$ on which $\gamma_{u,v}$ is supported. Suppose $p \geq q$. By expanding the interval $[i,j]$ as necessary, assume that $i \leq q$ and $p \leq j$. 
	
	On the interval $[i,j]$, define
	\begin{align*}
		A &= \{z \in [i,j] : z \text{ is initial} \}, \\
		B &= \{z \in [i,j] : z \text{ is final} \}, \\
		C &=  \{z \in [i,j] : \gamma_{u,v}(z) = \plus \}, \text{ and} \\
		D &= \{z \in [i,j] : \gamma_{u,v}(z) = \minus \} 
	\end{align*}
	and define 
	\[
	a = |A|, b = |B|, c =|C|, \text{ and } c = |D|.
	\]
	On the interval $[i,j]$, both $u$ and $v$ take all of the values in $[i,j]$. We see that the $z \in [i,j]$ with $u(z) \in [i,p]$ are those $z \in A \cup C$, while those $z$ with $u(z) \in [p+1, j]$ are those $z \in B \cup D$. Therefore, $a+c = p-i+1$ and $b+d = j-p$. Similarly, the $z \in [i,j]$ with $v(z) \in [i,q]$ are those $z \in A \cup D$, while those $z$ with $v(z) \in [q+1,j]$ are those $z \in B \cup C$. Therefore, $a+d = q-i+1$ and $b+c = j-q$.
	
	By the definition of ``supported,'' the interval $[i,j]$ contains equal numbers of initial and of final elements, so $a=b$. Thus, $\zeta(\gamma_{u,v}) = c-d = j-q-(j-p) = p-q$, as desired for a $(p,q)$-clan.
	Moreover, we have $p-i+1 - (j-q) = 0$, so $p+q = i+j-1$, and so $\gamma_{u,v}$ is a $(p,q)$-clan.
	
	The case $p < q$ is entirely analogous; we omit the details.
\end{proof}

In the case that $p$ and $q$ are positive, $u,v \in S_{p+q}$, and $u \leq w_0^{(p+q)}v$, Lemma~\ref{lem:gamma_exists} was previously established by Wyser \cite[p.\ 840]{Wyser}.

For any positive integers $p,q \in \mathbb{Z}_{>0}$, we define a set $\Psi_{p,q}$ of almost rainbow $(p,q)$-clans. Let
\[\mbox{ \small  
$\Psi_{p,q} = \begin{cases}
	\{ \gamma : \gamma \text{ is almost rainbow and } T_i \cdot \gamma = \Omega_{p,q} \text{ for some }  i \in \mathbb{Z}_{>0} \text{ with } i \neq q \}, & \!\!\!\text{if } p \neq q; \\
	\{ \gamma : \gamma \text{ is almost rainbow and } T_i \cdot \gamma = \Omega_{p,q} \text{ for some } i \in \mathbb{Z}_{>0} \text{ with } i \neq q \} \cup \{\omega_p \}, & \!\!\! \text{if } p = q. \\
\end{cases}$}
\]

Note that in general $\Psi_{p,q} \neq \Psi_{q,p}$. If we relax the conditions $i \in \mathbb{Z}_{>0}$ to $i \in \mathbb{Z}$, then the enlarged set of almost rainbow $(p,q)$-clans can be used to compute \emph{backstable Schubert structure coefficients} (introduced in \cite{Lam.Lee.Shimozono}).

\section{Discussion and proof of \texorpdfstring{Theorem~\ref{thm:conj1}}{Theorem 1.1}}\label{sec:proof11}
First, we recall \cite[Theorem~3.10]{Wyser}, which establishes the case of Theorem~\ref{thm:conj1} where $u,v,w \in S_{p+q}$ and $u \leq w_0^{(p+q)}v$. We will use Wyser's result to prove Theorem~\ref{thm:conj1}. Theorem~\ref{thm:conj1} will then be a major ingredient in our proof of Theorem~\ref{thm:conj2} in Section~\ref{sec:proof12}.

\begin{theorem}[{\cite[Theorem~3.10]{Wyser}}]\label{thm:wyser}
	Let $u,v,w \in S_{p+q}$ be permutations, where
	 $u$ is $p$-inverse Grassmannian, $v$ is $q$-inverse Grassmannian, and $u \leq w_0^{(p+q)}v$. Then $c_{u,v}^w \in \{0,1\}$. Moreover, we have $c_{u,v}^w = 1$ if and only if $\ell(w)=\ell(u)+\ell(v)$ and
	 \pushQED{\qed}
	 \[
	 T_w \cdot \gamma_{u,v} = \Omega_{p,q}.  \qedhere \popQED\] \let\qed\relax
\end{theorem}

For a permutation $w \in S_n$, let $1 \times w \in S_{n+1}$ denote the permutation such that $(1 \times w)(1) = 1$ and $(1 \times w)(i) = w(i-1)+1$ for $i>1$. Iterating this operation gives rise to the notion of \emph{backstabilization} of Schubert calculus; for further discussion, see \cite{Lam.Lee.Shimozono,Nenashev}. We will, however, only need to apply this operation once. The following fact is straightforward; moreover, it is a special case of Lemma~\ref{lem:2}, which we prove later.

\begin{lemma}\label{lem:easy}
	For any $u,v,w \in S_+$, we have $c_{u,v}^w=c_{1\times u,1\times v}^{1\times w}$. \qed 
\end{lemma}


\begin{proof}[Proof of Theorem~\ref{thm:conj1}]
	Fix an interval $[i,j]$ on which $\gamma_{u,v}$ is supported. If $i >1$, then $\gamma_{u,v}$ is also supported on the interval $[1,p+q]$. Therefore, $\gamma_{u,v}$ is a noncrossing $[1,p+q]$-clan, so by \cite[Remark~3.9]{Wyser}, we have $u, v \in S_{p+q}$ and $u \leq w_0^{(p+q)} v$. We are then done in this case by Theorem~\ref{thm:wyser}.
	
	Suppose instead that $i < 1$. Then, let $\tilde{\gamma}$ be the horizontal shift of $\gamma_{u,v}$ to the right by $1-i$. That is, let 
	\[
	\tilde{\gamma}(z) = \gamma_{u,v}(z+i-1),
	\]
	so that $\tilde{\gamma}$ is supported on $[1, j-i+1]$.
	Also define $\tilde{u}$ and $\tilde{v}$ by
	\[
	\tilde{u}(z) = u(z+i-1) \text{ and } \tilde{v}(z) = v(z+i-1).
	\]
	Then, $\tilde{u}$ is $\tilde{p}$-inverse Grassmannian and $\tilde{v}$ is $\tilde{q}$-inverse Grassmannian, where $\tilde{p} = p+1-i$ and $\tilde{q} = q+1-i$. Note that, since $p+q = i+j-1$, we have $\tilde{p} + \tilde{q} = j-i+1$.
	Further, observe that $\gamma_{\tilde{u}, \tilde{v}} = \tilde{\gamma}$ by construction. 
	
	The clan $\gamma_{\tilde{u}, \tilde{v}}$ is a noncrossing clan supported on $[1,\tilde{p} + \tilde{q}]$. Therefore, by \cite[Remark~3.9]{Wyser}, we have $\tilde{u}, \tilde{v} \in S_{\tilde{p} + \tilde{q}}$ and $\tilde{u} \leq w_0^{(\tilde{p} + \tilde{q})} \tilde{v}$.
	
	Now, define $\tilde{w}$ by 
	\[
	\tilde{w}(z) = w(z+i-1).
	\]
	Note that 
	\[ 
	(T_w \cdot \gamma_{u,v})(z) = (T_{\tilde{w}} \cdot \gamma_{\tilde{u}, \tilde{v}})(z+i -1)
\]
for all $z$. This implies that $T_w \cdot \gamma_{u,v} = \Omega_{p,q}$ if and only if $T_{\tilde{w}} \cdot \gamma_{\tilde{u}, \tilde{v}} = \Omega_{\tilde{p}, \tilde{q}}$. 

By Lemma~\ref{lem:easy}, we have that
\[
c_{u,v}^w = c_{\tilde{u}, \tilde{v}}^{\tilde{w}},
\]
since $\tilde{u} = 1^{1-i} \times u$, $\tilde{v} = 1^{1-i} \times v$, and $\tilde{w} = 1^{1-i} \times w$. Thus, by Theorem~\ref{thm:wyser}, we have $c_{u,v}^w \in \{0,1\}$. Also note that $\ell(\tilde{u}) = \ell(u)$, $\ell(\tilde{v}) = \ell(v)$, and $\ell(\tilde{w}) = \ell(w)$. 
Thus, Theorem~\ref{thm:wyser} additionally yields that $c_{u,v}^w = 1$ if and only if $T_w \cdot \gamma_{u,v} = \Omega_{p,q}$. This completes the proof of Theorem~\ref{thm:conj1}.
\end{proof}

\begin{remark}\label{rem:cant_do_it}
	Note that the hypotheses of Theorem~\ref{thm:wyser} are somewhat restrictive.  For instance, Theorem~\ref{thm:wyser} is unable to compute any structure coefficient of the form $c_{231,231}^w$, since $231$ is $1$-inverse Grassmannian but $231\not \in S_{1+1}$.  Example~\ref{ex:231} demonstrates how we can instead compute these structure coefficients using backstable clans and Theorem~\ref{thm:conj1}.
	
	Similarly, Theorem~\ref{thm:wyser} cannot compute any of the structure coefficients $c_{213,312}^w$ because $213\not \leq w_{0}^{(3)}312=132$. See Example~\ref{ex:213-312} for a demonstration of computing these structure coefficients through backstable clans and Theorem~\ref{thm:conj1}.
\end{remark}

We now show how Theorem~\ref{thm:conj1} uses backstable clans to compute the Schubert structure coefficients described in Remark~\ref{rem:cant_do_it}.

\begin{example}\label{ex:231}
	Let $u = 231 \in S_3$. The backstable clan $\gamma_{u,u}$ looks like
	\begin{center}
\begin{tikzpicture}
				\foreach \i in {-1,...,4}
	{
	\node (\i) at (\i,0) {\i };	
	}
    \draw[thick] (1,0.3) arc
    [
        start angle=0,
        end angle=180,
        x radius=.5cm,
        y radius =0.4cm
    ] ;
        \draw[thick] (2,0.3) arc
    [
        start angle=0,
        end angle=180,
        x radius=1.5cm,
        y radius =0.7cm
    ] ;
    \draw[thick] (4,0.3) arc
    [
        start angle=0,
        end angle=180,
        x radius=0.5cm,
        y radius =0.4cm
    ] ;
		\end{tikzpicture}.
	\end{center} 
	We consider all nontrivial actions of $0$-Hecke generators $T_i$ on $\gamma_{u,u}$, until reaching the rainbow clan $\Omega_{1,1}$:
		\begin{center}
	\scalebox{0.72}{
\begin{tikzpicture}
	\node (center) at (0,0) {
		\begin{tikzpicture}
	\foreach \i in {-1,...,4}
	{
		\node (\i) at (\i,0) {\i };	
	}
	\draw[thick] (1,0.3) arc
	[
	start angle=0,
	end angle=180,
	x radius=.5cm,
	y radius =0.4cm
	] ;
	\draw[thick] (3,0.3) arc
	[
	start angle=0,
	end angle=180,
	x radius=2cm,
	y radius =0.75cm
	] ;
	\draw[thick] (4,0.3) arc
	[
	start angle=0,
	end angle=180,
	x radius=1cm,
	y radius =0.4cm
	] ;
	
\end{tikzpicture}};
    \node (s2) at (0,3) {
			\begin{tikzpicture}
				\foreach \i in {-1,...,4}
	{
	\node (\i) at (\i,0) {\i };	
	}
    \draw[thick] (1,0.3) arc
    [
        start angle=0,
        end angle=180,
        x radius=.5cm,
        y radius =0.4cm
    ] ;
        \draw[thick] (2,0.3) arc
    [
        start angle=0,
        end angle=180,
        x radius=1.5cm,
        y radius =0.7cm
    ] ;
    \draw[thick] (4,0.3) arc
    [
        start angle=0,
        end angle=180,
        x radius=0.5cm,
        y radius =0.4cm
    ] ;
		\end{tikzpicture}};
	\node (s4) at (7,-2) {
		\begin{tikzpicture}
	\foreach \i in {-1,...,4}
	{
		\node (\i) at (\i,0) {\i };	
	}
	\draw[thick] (1,0.3) arc
	[
	start angle=0,
	end angle=180,
	x radius=.5cm,
	y radius =0.4cm
	] ;
	\draw[thick] (4,0.3) arc
	[
	start angle=0,
	end angle=180,
	x radius=2.5cm,
	y radius =0.75cm
	] ;
	\draw[thick] (3,0.3) arc
	[
	start angle=0,
	end angle=180,
	x radius=.5cm,
	y radius =0.4cm
	] ;
	
\end{tikzpicture}};
	\node (s6) at (-7,-2) {
		\begin{tikzpicture}
	\foreach \i in {-1,...,4}
	{
		\node (\i) at (\i,0) {\i };	
	}
	\draw[thick] (2,0.3) arc
	[
	start angle=0,
	end angle=180,
	x radius=1cm,
	y radius =0.5cm
	] ;
	\draw[thick] (3,0.3) arc
	[
	start angle=0,
	end angle=180,
	x radius=2cm,
	y radius =0.75cm
	] ;
	\draw[thick] (4,0.3) arc
	[
	start angle=0,
	end angle=180,
	x radius=1.5cm,
	y radius =0.5cm
	] ;
\end{tikzpicture}
};
	\node (rainbow) at (0,-5) {
		\begin{tikzpicture}
	\foreach \i in {-1,...,4}
	{
		\node (\i) at (\i,0) {\i };	
	}
	\draw[thick] (2,0.3) arc
	[
	start angle=0,
	end angle=180,
	x radius=1cm,
	y radius =0.5cm
	] ;
	\draw[thick] (4,0.3) arc
	[
	start angle=0,
	end angle=180,
	x radius=2.5cm,
	y radius =0.75cm
	] ;
	\draw[thick] (3,0.3) arc
	[
	start angle=0,
	end angle=180,
	x radius=1cm,
	y radius =0.5cm
	] ;
\end{tikzpicture}};
	\node (leftthing) at (-8,-5) {
							\begin{tikzpicture}
	\foreach \i in {-1,...,4}
	{
		\node (\i) at (\i,0) {\i };	
	}
	\draw[thick] (2,0.3) arc
	[
	start angle=0,
	end angle=180,
	x radius=.5cm,
	y radius =0.4cm
	] ;
	\draw[thick] (3,0.3) arc
	[
	start angle=0,
	end angle=180,
	x radius=2cm,
	y radius =0.7cm
	] ;
	\draw[thick] (4,0.3) arc
	[
	start angle=0,
	end angle=180,
	x radius=2cm,
	y radius =0.7cm
	] ;
\end{tikzpicture}
};
\node (actualrainbow) at (-4,-8) {
							
\begin{tikzpicture}
	\foreach \i in {-1,...,4}
	{
		\node (\i) at (\i,0) {\i };	
	}
	\draw[thick] (3,0.3) arc
	[
	start angle=0,
	end angle=180,
	x radius=1.5cm,
	y radius =0.75cm
	] ;
	\draw[thick] (4,0.3) arc
	[
	start angle=0,
	end angle=180,
	x radius=2.5cm,
	y radius =1cm
	] ;
	\draw[thick] (2,0.3) arc
	[
	start angle=0,
	end angle=180,
	x radius=.5cm,
	y radius =0.4cm
	] ;
\end{tikzpicture}
};
	\draw[<-,thick,darkblue] (center) -- node[left]{$T_2$} (s2) ;
	\draw[->,thick,darkblue] (center) -- node[above right]{$T_3$} (s4) ;
	\draw[->,thick,darkblue] (center) -- node[above left]{$T_1$} (s6) ;
	\draw[->,thick,darkblue] (s4) -- node[below right]{$T_1$} (rainbow) ;
	\draw[->,thick,darkblue] (s6) -- node[below]{$T_3$} (rainbow) ;
	\draw[->,thick,darkblue] (rainbow) -- node[below right]{$T_{0},T_2$} (actualrainbow) ;
	\draw[->,thick,darkblue] (s6) -- node[above left]{$T_0$} (leftthing) ;
	\draw[->,thick,darkblue] (leftthing) -- node[below left]{$T_{-1},T_3$} (actualrainbow) ;
\end{tikzpicture}}.
	\end{center}

There are two paths in this diagram from $\gamma_{u,u}$ to $\Omega_{1,1}$ using only $T_i$ with $i>0$; these paths are
labeled by the sequences $(T_2,T_1,T_3,T_2)$ and $(T_2,T_3,T_1,T_2)$, which both correspond to the permutation $3412 = s_2 s_3 s_1 s_2 = s_2 s_1 s_3 s_2$.  Thus by Theorem~\ref{thm:conj1}, we have that $c_{u,u}^{3412} = 1$, while $c_{u,u}^w = 0$ for all $w \neq 3412$.
\end{example}

\begin{example}\label{ex:213-312}
	Let $u = 213$ and $v = 312$. Then the backstable clan $\gamma_{u,v}$ looks like
		\begin{center}
		\begin{tikzpicture}
				\foreach \i in {0,...,4}
	{
	\node (\i) at (\i+1,0) {\i };	
	}
    \draw[thick] (2,0.3) arc
    [
        start angle=0,
        end angle=180,
        x radius=0.5cm,
        y radius =0.5cm
    ] ;
    \draw[thick] (5,0.3) arc
    [
        start angle=0,
        end angle=180,
        x radius=1.0cm,
        y radius =0.5cm
    ] ;
	\node (2lab) at (4,0.4) {$\minus$};
		\end{tikzpicture}.
	\end{center}
	We consider all nontrivial actions of $0$-Hecke generators $T_i$ on $\gamma_{u,v}$, until reaching the rainbow clan $\Omega_{1,2}$:
		\begin{center}
	\scalebox{0.72}{
\begin{tikzpicture}
	\node (center) at (0,0) {
		\begin{tikzpicture}
				\foreach \i in {0,...,4}
	{
	\node (\i) at (\i+1,0) {\i };	
	}
    \draw[thick] (5,0.3) arc
    [
        start angle=0,
        end angle=180,
        x radius=1.5cm,
        y radius =0.5cm
    ] ;
    \draw[thick] (3,0.3) arc
    [
        start angle=0,
        end angle=180,
        x radius=1.0cm,
        y radius =0.5cm
    ] ;
	\node (4lab) at (4,0.4) {$\minus$};
		\end{tikzpicture}};
    \node (s2) at (0,3) {
			\begin{tikzpicture}
				\foreach \i in {0,...,4}
	{
	\node (\i) at (\i+1,0) {\i };	
	}
    \draw[thick] (2,0.3) arc
    [
        start angle=0,
        end angle=180,
        x radius=0.5cm,
        y radius =0.5cm
    ] ;
    \draw[thick] (5,0.3) arc
    [
        start angle=0,
        end angle=180,
        x radius=1.0cm,
        y radius =0.5cm
    ] ;
	\node (2lab) at (4,0.4) {$\minus$};
		\end{tikzpicture}};
	\node (s4) at (7,-2) {
		\begin{tikzpicture}
				\foreach \i in {0,...,4}
	{
	\node (\i) at (\i+1,0) {\i };	
	}
    \draw[thick] (5,0.3) arc
    [
        start angle=0,
        end angle=180,
        x radius=1.5cm,
        y radius =0.6cm
    ] ;
    \draw[thick] (4,0.3) arc
    [
        start angle=0,
        end angle=180,
        x radius=1.5cm,
        y radius =0.6cm
    ] ;
	\node (1lab) at (3,0.4) {$\minus$};
		\end{tikzpicture}};
	\node (s6) at (-7,-2) {
		\begin{tikzpicture}
				\foreach \i in {0,...,4}
	{
	\node (\i) at (\i+1,0) {\i };	
	}
    \draw[thick] (3,0.3) arc
    [
        start angle=0,
        end angle=180,
        x radius=0.5cm,
        y radius =0.4cm
    ] ;
    \draw[thick] (5,0.3) arc
    [
        start angle=0,
        end angle=180,
        x radius=2.0cm,
        y radius =0.7cm
    ] ;
	\node (4lab) at (4,0.4) {$\minus$};
		\end{tikzpicture}};
	\node (rainbow) at (0,-4) {
		\begin{tikzpicture}
				\foreach \i in {0,...,4}
	{
	\node (\i) at (\i+1,0) {\i };	
	}
    \draw[thick] (5,0.3) arc
    [
        start angle=0,
        end angle=180,
        x radius=2.0cm,
        y radius =0.7cm
    ] ;
    \draw[thick] (4,0.3) arc
    [
        start angle=0,
        end angle=180,
        x radius=1.0cm,
        y radius =0.5cm
    ] ;
	\node (1lab) at (3,0.4) {$\minus$};
		\end{tikzpicture}};
	\draw[<-,thick,darkblue] (center) -- node[left]{$T_1$} (s2) ;
	\draw[->,thick,darkblue] (center) -- node[below]{$T_2$} (s4) ;
	\draw[->,thick,darkblue] (center) -- node[below]{$T_0$} (s6) ;
	\draw[->,thick,darkblue] (s4) -- node[below right]{$T_0, T_3$} (rainbow) ;
	\draw[->,thick,darkblue] (s6) -- node[below]{$T_2$} (rainbow) ;
\end{tikzpicture}}.
	\end{center}
	There is a unique path in this diagram from $\gamma_{u,v}$ to $\Omega_{1,2}$ using only $T_i$ with $i>0$, namely that labeled by the sequence $(T_1, T_2, T_3)$. Note that $s_3 s_2 s_1 = 4123$. We conclude by Theorem~\ref{thm:conj1} that $c_{u,v}^{4123} = 1$, while $c_{u,v}^w = 0$ for all $w \neq 4123$.
\end{example}

\section{Linear relations among Schubert structure coefficients}\label{sec:WLR}

In this section, we establish new linear relations among Schubert structure coefficients. In the first subsection, we derive linear relations among cohomological structure coefficients; we will use these relations in Section~\ref{sec:proof12} to prove Theorem~\ref{thm:conj2}. In the second subsection, we derive analogous linear relations among $K$-theoretic structure coefficients; these relations will not be explored further in the later sections of this paper. The third subsection studies stabilization phenomena to obtain additional linear relations among cohomological structure coefficients; these relations will also be important to the proof of Theorem~\ref{thm:conj2} in Section~\ref{sec:proof12}. The fourth subsection considers relations obtained by iterating the technique of the first subsection; these relations will not be studied further in this paper.

\subsection{Cohomology}

We will need the differential operator $\nabla : \mathbb{Z}[x_1, x_2, \dots] \to \mathbb{Z}[x_1, x_2, \dots]$ defined by 
\[
\nabla = \sum_{i=1}^\infty \frac{\partial}{\partial x_i}.
\]
Our key tool will be the following, developed earlier in our joint work with Z.~Hamaker and D.~Speyer.

\begin{proposition}[{\cite[Proposition~1.1]{Hamaker.Pechenik.Speyer.Weigandt}}]\label{prop:HPSW}
	For $w \in S_+$, we have 
	\[
	\nabla \mathfrak{S}_w = \sum_{s_k w < w} k \mathfrak{S}_{s_k w}.
	\]
\end{proposition}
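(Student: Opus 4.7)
The plan is a three-step argument starting from the longest element $w_0$ and propagating outward via the divided difference recursion.

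\textbf{Step 1 (Base case $w = w_0$).} Since $\mathfrak{S}_{w_0} = x_1^{n-1}x_2^{n-2}\cdots x_n^0$ is a single monomial,
\[
\nabla \mathfrak{S}_{w_0} = \sum_{i=1}^{n-1}(n-i) \cdot \mathfrak{S}_{w_0}/x_i.
\]
I would identify each quotient $\mathfrak{S}_{w_0}/x_i$ with $\mathfrak{S}_{s_{n-i}w_0}$: the permutation $s_{n-i}w_0 = w_0 s_i$ is dominant with Lehmer code equal to the exponent vector of $\mathfrak{S}_{w_0}/x_i$, namely $(n-1,\ldots,n-i+1,\,n-i-1,\,n-i-1,\,n-i-2,\ldots,0)$, and Schubert polynomials of dominant permutations are simply the corresponding monomials $x^{\mathrm{code}}$. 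Reindexing by $k = n-i$ and noting that every $k \in \{1,\ldots,n-1\}$ is a left descent of $w_0$ yields the claim for $w_0$.

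\textbf{Step 2 ($\nabla$ commutes with $N_i$).} For $j \notin \{i, i+1\}$, $\partial/\partial x_j$ and $N_i$ act on disjoint variables and commute. For the remaining two indices, observe that $(\partial/\partial x_i + \partial/\partial x_{i+1})$ annihilates the denominator $x_i - x_{i+1}$ and commutes with $s_i$; the quotient rule applied to $N_i f = (f - s_i f)/(x_i - x_{i+1})$ then gives $(\partial/\partial x_i + \partial/\partial x_{i+1})\, N_i = N_i\,(\partial/\partial x_i + \partial/\partial x_{i+1})$. Summing over $j$ yields $[\nabla, N_i] = 0$.

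\textbf{Step 3 (Propagation).} Let $\sigma = w_0^{-1}w$, so that $\mathfrak{S}_w = N_\sigma \mathfrak{S}_{w_0}$, where $N_\sigma$ denotes the (well-defined, by braid relations) composition of divided differences corresponding to any reduced word of $\sigma$. Using step 2,
\[
\nabla \mathfrak{S}_w = N_\sigma \nabla \mathfrak{S}_{w_0} = \sum_{k=1}^{n-1} k\, N_\sigma \mathfrak{S}_{s_k w_0}.
\]
The standard action $N_\sigma \mathfrak{S}_u = \mathfrak{S}_{u\sigma}$ when $\ell(u\sigma) = \ell(u) - \ell(\sigma)$, and $0$ otherwise, gives $N_\sigma \mathfrak{S}_{s_k w_0} = \mathfrak{S}_{s_k w}$ precisely when $\ell(s_k w) = \ell(s_k w_0) - \ell(\sigma) = \ell(w) - 1$, i.e., when $s_k w < w$. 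This produces the desired formula. The main technical obstacle is the identification in step 1: matching the quotient $\mathfrak{S}_{w_0}/x_i$ with the Schubert polynomial of a dominant permutation, which ultimately reduces to a direct comparison between an exponent vector and a Lehmer code.
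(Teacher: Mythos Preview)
The paper does not prove this proposition; it is quoted verbatim from \cite{Hamaker.Pechenik.Speyer.Weigandt} and used as a black box. So there is no ``paper's own proof'' to compare against here.

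Your argument is correct and is essentially the standard one. A couple of minor points worth tightening:

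\begin{itemize}
\item In Step~1, the Lehmer code of $w_0 s_i$ is exactly as you wrote, and the permutation is indeed dominant, so $\mathfrak{S}_{w_0 s_i} = \mathfrak{S}_{w_0}/x_i$; this part is fine.
\item In Step~3, your convention forces $N_\sigma$ to mean $N_{a_k}\cdots N_{a_1}$ for a reduced word $(a_1,\ldots,a_k)$ of $\sigma$ (so that $N_\sigma \mathfrak{S}_u = \mathfrak{S}_{u\sigma}$ rather than $\mathfrak{S}_{u\sigma^{-1}}$). With $\sigma = w_0^{-1}w = w_0 w$ this is consistent, since $w_0\sigma = w$ and $\ell(\sigma) = \ell(w_0) - \ell(w)$. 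It would help the reader to state this convention explicitly.
\item The claim ``$N_\sigma \mathfrak{S}_u = 0$ otherwise'' deserves a one-line justification: if $\ell(u\sigma) > \ell(u) - \ell(\sigma)$, then along any reduced word for $\sigma$ some intermediate step must be a right ascent, at which point the corresponding $N_i$ annihilates the Schubert polynomial and all subsequent operators preserve $0$.
\end{itemize}

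With those clarifications, the three steps combine to a complete proof.
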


From this proposition, we can establish our primary family of linear relations. 

\begin{proposition}
\label{prop:wlr}
	Let $u,v,w \in S_+$. Then
	\begin{equation}\label{eq:main}
	\sum_{s_iu < u} i c_{s_iu,v}^w + \sum_{s_jv < v} j c_{u,s_jv}^w = \sum_{s_kw> w} k c_{u,v}^{s_k w}.
	\end{equation}
\end{proposition}

Proposition~\ref{prop:wlr} enables one to discern properties of an unknown $c_{u,v}^w$ from properties of other Schubert structure coefficients. In particular, our relations yield some new vanishing and nonvanishing conditions, as well as congruence conditions.  We do not know how to relate our linear relations to the the nonpositive recurrence of \cite{Knutson:recurrence}.

\begin{proof}[Proof of Proposition~\ref{prop:wlr}]
	Write \[
	\mathfrak{S}_u \cdot \mathfrak{S}_v = \sum_p c_{u,v}^p \mathfrak{S}_p
	\]
	 and apply the differential operator $\nabla$ to both sides to obtain
\begin{equation}\label{eq:nabla_the_product}
	\sum_{s_i u < u} i \mathfrak{S}_{s_i u} \mathfrak{S}_v + \sum_{s_j v < v} j \mathfrak{S}_u \mathfrak{S}_{s_j v} = \sum_p \sum_{s_k p < p} k c_{u,v}^p \mathfrak{S}_{s_k p}
\end{equation}
	by Proposition~\ref{prop:HPSW} and the Leibniz formula. Now extract the coefficient of $\mathfrak{S}_w$ from both sides of Equation~\eqref{eq:nabla_the_product} to obtain
	\[
	\sum_{s_i u < u} i c_{s_i u, v}^w + \sum_{s_jv < v} j c_{u,s_jv}^w = \sum_{s_k w > w} k c_{u,v}^{s_k w},
	\]
	as desired.
\end{proof}

Proposition~\ref{prop:wlr} has some surprising corollaries. The following result can be extracted straightforwardly from Monk's formula \cite{Monk}, but we can alternatively derive it easily from Proposition~\ref{prop:wlr}.

\begin{corollary}\label{cor:monk}
	Let $v \in S_+$ and let $i \in \mathbb{Z}_+$. Then there is some $k \in \mathbb{Z}_+$ such that $c_{s_i, v}^{s_k v} > 0$.
\end{corollary}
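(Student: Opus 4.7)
The plan is to apply Theorem~\ref{thm:main} directly with $u = s_i$ and $w = v$, and exploit the fact that the left-hand side picks up a single ``free'' positive contribution.

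First I would analyze the first sum in \eqref{eq:main} under the substitution $u = s_i$. The condition $s_a u < u$ with $u = s_i$ forces $\ell(s_a s_i) < 1$, so $s_a s_i = e$, which happens only when $a = i$. Since $\mathfrak{S}_e = 1$, we have $c_{e,v}^v = 1$, so the first sum collapses to exactly $i$. Thus \eqref{eq:main} with $w = v$ reads
\[
i \;+\; \sum_{s_j v < v} j\, c_{s_i,\, s_j v}^{\,v} \;=\; \sum_{s_k v > v} k\, c_{s_i,v}^{\,s_k v}.
\]

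Second, I would invoke nonnegativity. Every structure coefficient appearing is a nonnegative integer, so the left-hand side is at least $i$, hence strictly positive. Since each weight $k$ on the right-hand side is a positive integer, at least one of the coefficients $c_{s_i,v}^{\,s_k v}$ with $s_k v > v$ must itself be strictly positive. This yields the required $k$ (and in fact gives the slightly stronger statement that one may choose $k$ so that $s_k v$ covers $v$ in left weak order).

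There is essentially no obstacle here beyond bookkeeping: the only thing to verify is that $a = i$ is the unique index making $s_a s_i < s_i$, which is immediate from $\ell(s_i) = 1$. The nonnegativity of Schubert structure coefficients is the classical geometric input that makes the argument go through; without it, the vanishing of some terms on the right could conceivably cancel against the $+i$ on the left.
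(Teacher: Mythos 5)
Your proposal is correct and follows essentially the same route as the paper: specialize Theorem~\ref{thm:main} at $u=s_i$, $w=v$, note the first sum collapses to $i$ since $c_{e,v}^v=1$, and use nonnegativity of the structure coefficients to conclude some $c_{s_i,v}^{s_kv}>0$. Your closing remark that such a $k$ automatically satisfies $s_kv \gtrdot v$ matches the paper's dimension-counting observation.
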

\begin{proof}
	Specialize Proposition~\ref{prop:wlr} to the case $u = s_i$ and $w=v$. Then we get
	\[
	i + \sum_{s_j v < v} j c_{s_i, s_jv}^w = \sum_{s_k v > v} k c_{s_i, v}^{s_k v}.
	\]
	Since the sum on the left is nonnegative and $i>0$, we obtain
	\[
	0 < \sum_{s_k v > v} k c_{s_i, v}^{s_k v}.
	\]
	But then 
	\[
	0 < \sum_{s_k v > v} c_{s_i, v}^{s_k v},
	\]
so there is some $k$ with $s_k v > v$ and $c_{s_i, v}^{s_k v} > 0$. By dimension counting, the second of these conditions implies the first, so the corollary follows.
\end{proof}

Proposition~\ref{prop:wlr} also implies many congruence relations among Schubert structure coefficients.

\begin{corollary}\label{cor:residue}
	Suppose all left descents of $u$ and $v$ are multiples of $\alpha\in \mathbb Z_+$. Then, for any $w \in S_+$, we have
	\[
	\sum_{s_k w > w} k c_{u,v}^{s_k w} \equiv 0 \pmod{\alpha}.
	\]
	If moreover $u=v$, then 
	\[
	\sum_{s_k w > w} k c_{u,u}^{s_k w} \equiv 0 \pmod {2\alpha}.
	\]
\end{corollary}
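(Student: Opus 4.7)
The plan is to invoke Theorem~\ref{thm:main} directly and read off divisibility. Fix $u,v,w$ and stare at the linear relation~\eqref{eq:main}. By hypothesis, every index $i$ appearing in the sum $\sum_{s_iu<u} i\, c_{s_iu,v}^w$ is a left descent of $u$ (this is precisely the equivalent reformulation $s_iu<u \Leftrightarrow i \in \Des_L(u)$ recorded earlier in Section~\ref{sec:main}) and hence is a multiple of $\alpha$; likewise every $j$ in the second sum is a multiple of $\alpha$. Thus every term on the left-hand side of \eqref{eq:main} is individually divisible by $\alpha$, so the whole left-hand side lies in $\alpha\mathbb{Z}$. Since it equals $\sum_{s_kw>w} k\, c_{u,v}^{s_kw}$, the first congruence is immediate.

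For the second statement I would specialize to $u=v$. Both sums on the left of \eqref{eq:main} now range over left descents of $u$, so the term-by-term $\alpha$-divisibility continues to hold. The extra factor of $2$ comes from exploiting the symmetry
\[
c_{a,b}^w = c_{b,a}^w,
\]
which is a consequence of the commutativity of the cohomology ring (equivalently, of polynomial multiplication $\mathfrak{S}_a\mathfrak{S}_b = \mathfrak{S}_b\mathfrak{S}_a$). Applying this with $a = s_iu$ and $b = u$ gives $c_{s_iu,u}^w = c_{u,s_iu}^w$, so the two sums on the left side of \eqref{eq:main} are literally equal, and the left-hand side becomes $2\sum_{s_iu<u} i\, c_{s_iu,u}^w$. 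Combined with $\alpha \mid i$, this is divisible by $2\alpha$, and hence so is the right-hand side.

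I do not expect any serious obstacle here: the corollary is a clean formal consequence of Theorem~\ref{thm:main} together with the symmetry of the Schubert structure coefficients. The only subtle bookkeeping point is making sure that the summation indices in \eqref{eq:main} really are the left descents of $u$ and of $v$ (not the right descents), but this is exactly the equivalence $s_iw<w \Leftrightarrow w^{-1}(i)>w^{-1}(i+1)$ stated in the preliminary paragraph on descents.
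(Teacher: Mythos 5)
Your proof is correct and is essentially the paper's own argument: term-by-term divisibility of the left side of Equation~\eqref{eq:main} by $\alpha$, plus, when $u=v$, the equality of the two left-hand sums via the symmetry $c_{a,b}^w=c_{b,a}^w$. You merely spell out the symmetry step (which the paper leaves implicit), so there is nothing to add.
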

\begin{proof}
Under the hypotheses of the corollary, every term of each sum on the left of Equation~\eqref{eq:main} is a multiple of $\alpha$. Hence, the sum on the right is as well. If also $u=v$, then the two sums on the left of Equation~\eqref{eq:main} are equal to each other.
\end{proof}

What is remarkable about Corollary~\ref{cor:residue} is that although our sum is congruent to $0$ modulo $\alpha$, the individual terms of the sum generally are not. For this reason, knowing some of the relevant Schubert structure coefficients imposes strong conditions on the remaining ones. Before giving an example of the application of Corollary~\ref{cor:residue}, we need the following easy lemma.

\begin{lemma}\label{lem:easy_vanish}
	Suppose $u,v,a \in S_n$ and let $m >n$. Then $c_{u,v}^{s_m a} = 0$.
\end{lemma}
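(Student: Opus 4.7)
The plan is to combine positivity of Schubert structure coefficients with variable-support considerations for Schubert polynomials.

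First, since $u, v \in S_n$, a standard fact (provable via the pipe dream formula for Schubert polynomials) gives $\mathfrak{S}_u, \mathfrak{S}_v \in \mathbb{Z}[x_1, \ldots, x_{n-1}]$; hence $\mathfrak{S}_u \mathfrak{S}_v \in \mathbb{Z}[x_1, \ldots, x_{n-1}]$ as well.

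Next, I would locate a monomial of $\mathfrak{S}_{s_m a}$ that involves a variable $x_j$ with $j \geq n$. Since $a \in S_n$ fixes both $m$ and $m+1$, a direct computation gives $(s_m a)(m) = m+1 > m = (s_m a)(m+1)$, so $s_m a$ has a descent at position $m$. In particular, the Lehmer code of $s_m a$ has a positive entry in position $m$, and the corresponding code monomial $\prod_i x_i^{c_i}$ appears in $\mathfrak{S}_{s_m a}$ with coefficient $1$ (via, e.g., the bottom pipe dream) and involves $x_m$ with $m > n - 1$. Expanding $\mathfrak{S}_u \mathfrak{S}_v = \sum_p c_{u,v}^p \mathfrak{S}_p$ and comparing coefficients of this monomial on both sides, the left side vanishes while the right side is a sum of non-negative integers (by non-negativity of the $c_{u,v}^p$ together with non-negativity of monomial coefficients of Schubert polynomials) in which $c_{u,v}^{s_m a} \cdot 1$ is one summand; the lemma follows.

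The main subtlety is the assertion that the Lehmer code monomial always appears in $\mathfrak{S}_w$ with positive coefficient. If one prefers to avoid invoking this, an alternative route is to cite the basis property that $\{\mathfrak{S}_w : w \in S_\infty \text{ has no descents past position } k\}$ forms a $\mathbb{Z}$-basis of $\mathbb{Z}[x_1, \ldots, x_k]$, whence uniqueness of the Schubert expansion immediately yields $c_{u,v}^{s_m a} = 0$ from the containment in the first paragraph together with the descent observation in the second.
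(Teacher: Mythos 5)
Your proof is correct, and its core is the same as the paper's: the product $\mathfrak{S}_u\mathfrak{S}_v$ lies in $\mathbb{Z}[x_1,\dots,x_{n-1}]$, while $s_m a$ has a descent at position $m>n-1$ (your computation $(s_m a)(m)=m+1>(s_m a)(m+1)=m$ is exactly why the paper can say $m$ is a descent of $s_m a$), which forces $x_m$ into $\mathfrak{S}_{s_m a}$. Where you differ is in how you close the argument. Your primary route extracts the code monomial $x^{\mathrm{code}(s_m a)}$ and uses nonnegativity of all monomial coefficients of Schubert polynomials together with nonnegativity of the coefficients $c_{u,v}^p$ (a geometric input, valid for all $p\in S_\infty$ by stability) to rule out cancellation; note you do not even need the code monomial to appear with coefficient exactly $1$ --- strict positivity, which is immediate from the bottom pipe dream, already suffices, so the ``subtlety'' you flag is harmless. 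Your alternative route --- that the Schubert polynomials indexed by permutations with no descents past position $k$ form a $\mathbb{Z}$-basis of $\mathbb{Z}[x_1,\dots,x_k]$ --- is essentially the paper's own argument, and in fact makes explicit the basis fact that the paper's terse ``hence'' implicitly relies on; it has the advantage of being purely algebraic, avoiding positivity of the structure coefficients altogether. Either version is a complete proof.
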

\begin{proof}
	The Schubert polynomials $\mathfrak{S}_u$, $\mathfrak{S}_v$, $\mathfrak{S}_a$ all lie in $\mathbb{Z}[x_1, \dots, x_{n-1}]$. On the other hand, $m$ is a (right) descent of $s_m a$, so $\mathfrak{S}_{s_m a}$ involves the variable $x_m$, so $\mathfrak{S}_{s_m a} \notin \mathbb{Z}[x_1, \dots, x_{n-1}]$. Hence, $c_{u,v}^{s_m a} = 0$.
\end{proof}

\begin{example}\label{ex:modular_arithmetic}
Suppose $u=13254$ and note that it only has left descents $2$ and $4$. Let $w = 231645$ and suppose we have correctly computed already that $c_{u,u}^w = 1$. 

Now let $a = s_1w = 132645$. Corollary~\ref{cor:residue} gives that 
\begin{equation}\label{eq:a}
	\sum_{s_k a > a} k c_{u,u}^{s_k a} \equiv 0 \pmod{4}.
\end{equation}
But we can expand this sum as 
\[
\sum_{s_k a > a} k c_{u,u}^{s_k a} = 1 c_{u,u}^{s_1 a}  +3 c_{u,u}^{s_3 a} +4 c_{u,u}^{s_4 a} +6 c_{u,u}^{s_6 a},\]
using Lemma~\ref{lem:easy_vanish} to see that the other potential terms vanish.
Since we know the term with coefficient $1$ contributes $1$, the sum of the other terms must be congruent to $3$ modulo $4$. In particular, we learn for free that  $c_{u,u}^{s_3 a} \neq 0$. 
 Even better, it is immediate without further computation that $c_{u,u}^{s_3 a}$ is odd. In fact, it turns out that $c_{u,u}^{s_3 a} = 1$.
 
If we compute also that $c_{u,u}^{s_3 a} = 1$, we learn then that $c_{u,u}^{s_6 a}$ must be even. In fact, it turns out that $c_{u,u}^{s_6 a} = 0$.
\end{example}

\subsection{$K$-theory} 

The structure sheaves of Schubert varieties $X_w \subset \Flags_n$ give classes $[\mathcal{O}_{X_w}]$ in the Grothendieck ring $K^0(\Flags_n)$ of algebraic vector bundles over $\Flags_n$. These classes form an additive basis and give rise to $K$-theoretic Schubert structure coefficients $K_{u,v}^w$ defined by
\[
[\mathcal{O}_{X_u}] \cdot [\mathcal{O}_{X_v}] = \sum_w K_{u,v}^w [\mathcal{O}_{X_w}].
\] When $\ell(w) = \ell(u) + \ell(v)$, we have $K_{u,v}^w = c_{u,v}^w$, but, unlike $c_{u,v}^w$, $K_{u,v}^w$ can be nonzero when $\ell(w) > \ell(u) + \ell(v)$.

\newword{Grothendieck polynomials} $\mathfrak{G}_w$ represent Schubert structure sheaf classes analogously to how Schubert polynomials represent Schubert cycles. We may also define Grothendieck polynomials recursively. For $w_0^{(n)} \in S_n$, we set $\mathfrak{G}_{w_0^{(n)}} = \mathfrak{S}_{w_0^{(n)}} = x_1^{n-1}x_2^{n-2}\cdots x_n^0$.  For $w$ such that $ws_i < w$, set
\[
\mathfrak{G}_{ws_i} = \overline{N}_i \mathfrak{G}_w,
\]
where $\overline{N}_i(f) = N_i( (1-x_{i+1}) f)$. We have $\mathfrak{G}_w = \mathfrak{G}_{\iota(w)}$, so we think of Grothendieck polynomials as also being indexed by elements of $S_+$.  The set of Grothendieck polynomials $\{\mathfrak{G}_w \}_{w \in S_+}$ is another linear basis of $\mathbb{Z}[x_1, x_2, x_3, \dots]$. The structure coefficients defined by 
\[
\mathfrak{G}_u \cdot \mathfrak{G}_v = \sum_w L_{u,v}^w \mathfrak{G}_w
\]
agree with the $K$-theoretic Schubert structure coefficients $K_{u,v}^w$ provided $u,v,w \in S_n$. 

Let $\beta$ be an indeterminate. Define the \newword{$\beta$-Grothendieck polynomial} $\mathfrak{G}_w^{(\beta)}$ by
\[
\mathfrak{G}_w^{(\beta)}(x_1, \dots, x_n) = (-\beta)^{-\ell(w)} \mathfrak{G}_w(-\beta x_1,  \dots, -\beta x_n). 
\] The $\beta$-Grothendieck polynomials were introduced in \cite{Fomin.Kirillov} and represent classes in the \emph{connective $K$-theory} of $\Flags_n$ \cite{Hudson}. We will find the $\beta$-Grothendieck polynomials slightly easier to work with in our context. Let the structure coefficients for $\beta$-Grothendieck polynomials be $K_{u,v}^w(\beta)$. We have $K_{u,v}^w(-1) = K_{u,v}^w$.

Let $\Des(w)$ denote the set of descents of the permutation $w$. 
 The \newword{major index} of $w$ is 
\[
\maj(w) = \sum_{i \in \Des(w)} i.
\] We also need the following differential operators related to $\nabla$:
\[
\nabla^\beta = \nabla + \beta^2 \frac{\partial}{\partial \beta} 
\quad \text{and} \quad  
E = \sum_{i=1}^\infty x_i \frac{\partial}{\partial x_i}.
\]
We can now recall \cite[Theorem~A.1]{Pechenik.Speyer.Weigandt}, as reformulated in \cite[Remark~A.2]{Pechenik.Speyer.Weigandt}, an analogue of Proposition~\ref{prop:HPSW} for Grothendieck polynomials and our key tool in this subsection.

\begin{proposition}[{\cite[Theorem~A.1]{Pechenik.Speyer.Weigandt}}]\label{prop:PSW}
	For $w \in S_+$, we have
	\[
	\nabla^\beta \mathfrak{G}_w^{(\beta)} = \beta (\maj(w^{-1}) - \ell(w)) \mathfrak{G}_w^{(\beta)} + \sum_{s_kw < w} k \mathfrak{G}^{(\beta)}_{s_k w}
	\]
	and 
	\[
	(\maj(w^{-1}) + \nabla - E)\mathfrak{G}_w = \sum_{s_kw < w} k \mathfrak{G}_{s_k w}. 
	\]
\end{proposition}

Using essentially the same proof as for Proposition~\ref{prop:wlr}, but with \cite[Theorem~A.1]{Pechenik.Speyer.Weigandt} in place of \cite[Proposition~1.1]{Hamaker.Pechenik.Speyer.Weigandt}, we obtain the following analogue of Proposition~\ref{prop:wlr}, giving linear relations among $K$-theoretic Schubert structure coefficients.

\begin{proposition}\label{prop:grothendieck}
		Let $u,v,w \in S_+$. Then
\begin{align*}
\beta K_{u,v}^w(\beta) \bigg( &\maj(u^{-1}) + \maj(v^{-1}) - \maj(w^{-1}) - \ell(u) - \ell(v) + \ell(w) \bigg) \\
&+	\sum_{s_iu < u} i K_{s_iu,v}^w(\beta) + \sum_{s_jv < v} j K_{u,s_jv}^w(\beta) = \sum_{s_kw> w} k K_{u,v}^{s_k w}(\beta).
\end{align*} 
\end{proposition}
\begin{proof}
Write 
\[
\sum_p K_{u,v}^p(\beta) \mathfrak{G}^{(\beta)}_p=\mathfrak{G}^{(\beta)}_u \cdot \mathfrak{G}^{(\beta)}_v
\]
 and apply $\nabla^\beta$ to both sides, using the first part of Proposition~\ref{prop:PSW}.
Then on the left we have
\begin{align*}
\nabla^\beta \left( \sum_p K_{u,v}^p(\beta) \mathfrak{G}^{(\beta)}_p \right) &= \sum_p K_{u,v}^p(\beta) \nabla^\beta \mathfrak{G}^{(\beta)}_p \\ &= \sum_p K_{u,v}^p(\beta)
 \left( \beta \mathfrak{G}^{(\beta)}_p (\maj(p^{-1}) - \ell(p)) + \sum_{s_kp < p} k \mathfrak{G}^{(\beta)}_{s_k p} 
 \right),
 \end{align*}
 while on the right we have
 \begin{align*}
 	\nabla^\beta ( \mathfrak{G}^{(\beta)}_u \cdot \mathfrak{G}^{(\beta)}_v) &= \nabla^\beta (\mathfrak{G}^{(\beta)}_u) \mathfrak{G}^{(\beta)}_v +
 	 \mathfrak{G}^{(\beta)}_u (\nabla^\beta \mathfrak{G}^{(\beta)}_v) \\
 	 &= \left( \beta \mathfrak{G}^{(\beta)}_u (\maj(u^{-1}) - \ell(u)) + \sum_{s_iu < u} i \mathfrak{G}^{(\beta)}_{s_i u}
 	 \right) \mathfrak{G}^{(\beta)}_v \\ &+ \mathfrak{G}^{(\beta)}_u \left(\beta \mathfrak{G}^{(\beta)}_v (\maj(v^{-1}) - \ell(v)) + \sum_{s_j v < v} j \mathfrak{G}^{(\beta)}_{s_j v} \right).
 \end{align*}
 Now we can extract the coefficient of $\mathfrak{G}_w^{(\beta)}$ from both  of these obtain
\begin{align*}
 \sum_{s_k w > w} k K_{u,v}^{s_k w}(\beta) + \beta (&\maj(w^{-1}) - \ell(w)) K_{u,v}^w(\beta)  = \beta(\maj(v^{-1}) - \ell(v)) K_{u,v}^w(\beta)\\ & + \beta (\maj(u^{-1})  - \ell(u)) K_{u,v}^w(\beta) +\sum_{s_i u < u} i K_{s_i u, v}^w(\beta)+ \sum_{s_j v < v} j K_{u, s_jv}^w(\beta).
\end{align*}
The proposition follows by rearranging and collecting terms.
\end{proof}

Let $\mathfrak{G}_w(\mathbf 1)$ denote the specialization of the Grothendieck polynomial $\mathfrak{G}_w$ obtained by setting all variables equal to $1$.
It is well known to experts that $\mathfrak G_w( \mathbf 1)=1$ (see \cite{Smirnov.Tutubalina} for an explicit proof and \cite{Meszaros.Setiabrata.StDizier} for further discussion).  We present a new short proof using the second part of Proposition~\ref{prop:PSW}.

\begin{corollary}
	Given $w\in S_+$, $\mathfrak G_w( \mathbf 1)=1$.
\end{corollary}
\begin{proof}
	We proceed by induction on Coxeter length.  In the base case, $\mathfrak G_{\rm id}=1$ and there is nothing to show.  Now fix $w\in S_+$ with $\ell(w) \geq 1$ and assume the statement holds for all $v\in S_+$ with $\ell(v)<\ell(w)$.

	First note, for any $f\in \mathbb Z[x_1,x_2,\ldots]$, we have $(\nabla-E)(f)|_{\mathbf x=1}=0$.  Thus,
	\begin{align*}
	\maj(w^{-1})\mathfrak G_w(\mathbf 1)&=\sum_{s_kw<w}k\mathfrak G_{s_kw}(\mathbf 1) &\text{(by Proposition~\ref{prop:PSW})}\\
	&=\sum_{s_kw<w}k &\text{(by induction)}\\
	&=\maj(w^{-1}).
	\end{align*}
Since $\ell(w)\geq 1$, $\maj(w^{-1})\neq 0$ which implies $\mathfrak G_w(\mathbf 1)=1$. 
\end{proof}

\subsection{Stabilization}\label{sec:stabilization}

Recall the stabilization operation from Section~\ref{sec:proof11}.

The following is an analogue of Proposition~\ref{prop:wlr}; we drop the coefficients on the linear relations at the expense of adding one extra term.

\begin{proposition}\label{prop:stable}
	Let $u,v,w \in S_+$. Then
		\begin{equation}\label{eq:stablization}
	\sum_{s_iu < u}  c_{s_iu,v}^w + \sum_{s_jv < v}  c_{u,s_jv}^w = c_{1\times u,1\times v}^{s_1(1\times w)}+\sum_{s_kw> w}  c_{u,v}^{s_k w}.
	\end{equation}
\end{proposition}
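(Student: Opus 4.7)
The plan is to apply Theorem~\ref{thm:main} twice---once to the shifted triple $(1\times u,1\times v,1\times w)$ and once to the original triple $(u,v,w)$---and then subtract. Under the $1\times$ operation, the weights $i$, $j$, $k$ appearing in Theorem~\ref{thm:main} are each shifted up by one, so subtracting the two instances cancels the weighted terms and produces a coefficient-free identity. The new contribution $k=1$ in the shifted application (which does not appear in the original, since $1$ is never a left descent of $1\times w$) is precisely what accounts for the extra summand $c_{1\times u,1\times v}^{s_1(1\times w)}$.

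The one substantive ingredient I would need is a stability identity for Schubert structure coefficients:
$$c_{1\times u,\,1\times v}^{\,1\times w}\;=\;c_{u,v}^{w}\qquad\text{for all }u,v,w\in S_\infty.$$
To establish this, I would use the specialization
$$\mathfrak{S}_q(0,x_2,x_3,\dots)=\begin{cases}\mathfrak{S}_p(x_2,x_3,\dots)&\text{if }q=1\times p,\\ 0&\text{if }q(1)\neq 1,\end{cases}$$
which follows from the pipe dream formula (in every reduced pipe dream for $q$, the top row must contain at least $q(1)-1$ crosses, so $x_1^{q(1)-1}$ divides $\mathfrak{S}_q$; when $q(1)=1$ the pipe dreams with empty top row biject with those of $p$). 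Setting $x_1=0$ in the Schubert expansions of $\mathfrak{S}_{1\times u}\cdot\mathfrak{S}_{1\times v}$ and comparing against the shifted expansion $\mathfrak{S}_u(x_2,\dots)\cdot\mathfrak{S}_v(x_2,\dots)=\sum_p c_{u,v}^p\mathfrak{S}_p(x_2,\dots)$, then invoking the linear independence of $\{\mathfrak{S}_p(x_2,x_3,\dots)\}_p$, forces the stability identity.

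The remainder is bookkeeping on left descents. A direct computation shows that the left descents of $1\times w$ are exactly $\{j+1:s_jw<w\}$, that $s_{j+1}(1\times w)=1\times s_jw$ for $j\geq 1$, and that $1$ is always a left ascent of $1\times w$. Applying Theorem~\ref{thm:main} to $(1\times u,1\times v,1\times w)$ and invoking the stability identity on every structure coefficient that appears then yields
$$\sum_{s_iu<u}(i+1)c_{s_iu,v}^{w}+\sum_{s_jv<v}(j+1)c_{u,s_jv}^{w}=c_{1\times u,\,1\times v}^{s_1(1\times w)}+\sum_{s_jw>w}(j+1)c_{u,v}^{s_jw},$$
where the term $c_{1\times u,1\times v}^{s_1(1\times w)}$ records exactly the $k=1$ contribution on the right. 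Subtracting Theorem~\ref{thm:main} applied to $(u,v,w)$ cancels the weighted terms $ic_{s_iu,v}^w$, $jc_{u,s_jv}^w$, and $kc_{u,v}^{s_kw}$, producing exactly Equation~\eqref{eq:stablization}.

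The main obstacle is establishing the stability identity; once that is in hand the proof reduces to translating descents under $1\times$ and performing a single subtraction of Theorem~\ref{thm:main} from itself.
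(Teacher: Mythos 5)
Your proposal is correct and follows essentially the same route as the paper: apply Theorem~\ref{thm:main} to $(1\times u,1\times v,1\times w)$ and to $(u,v,w)$, use the stability $c_{1\times u,1\times v}^{1\times w}=c_{u,v}^{w}$ (which the paper likewise derives from the $x_1=0$ specialization of Schubert polynomials via pipe dreams), and subtract, with the $k=1$ ascent of $1\times w$ producing the extra term $c_{1\times u,1\times v}^{s_1(1\times w)}$. The only difference is that you spell out the stability identity and the descent bookkeeping that the paper declares straightforward.
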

\begin{proof}
	It follows easily from the pipe dream formula for Schubert polynomials (e.g., \cite{Fomin.Kirillov.96,Bergeron.Billey,Knutson.Miller}) that 
	\[\mathfrak S_{1\times w}(0,x_1,x_2,\dots)=\mathfrak S_w(x_1, x_2,\dots). 
	\] 
	   (Indeed, we will prove a stronger version of this statement as Lemma~\ref{lem:1}.) By Lemma~\ref{lem:easy}, $c_{u,v}^w=c_{1\times u,1\times v}^{1\times w}$.
	
	Apply Proposition~\ref{prop:wlr} to $1 \times u$, $1 \times v$, and $1\times w$.  Then we get
		\begin{equation}
		\mbox{ \tiny 
		$\displaystyle \sum_{s_{i+1}(1\times u) < 1\times u} (i+1) c_{s_{i+1}(1\times u),1\times v}^{1\times w} + \sum_{s_{j+1} (1\times v) < 1\times v} (j+1) c_{1\times u,s_{j+1} (1\times v)}^{1\times w} = \sum_{s_{k+1}(1\times w) > 1\times w} (k+1) c_{1\times u,1\times v}^{s_{k+1} (1\times w)}.$}
	\end{equation}
Thus,
		\begin{equation}\label{eq:stable}
	\sum_{s_{i} u < u} (i+1) c_{s_{i} u, v}^{ w} + \sum_{s_{j}v <  v} (j+1) c_{ u,s_{j}  v}^{w} = c_{1\times u,1\times v}^{s_1 (1\times w)}+ \sum_{s_{k} w>  w} (k+1) c_{ u, v}^{s_{k}  w}.
\end{equation}
Furthermore, from applying Proposition~\ref{prop:wlr} to $u$, $v$, and $w$, we have
\begin{equation}\label{eq:notstable}
	\sum_{s_iu < u} i c_{s_iu,v}^w + \sum_{s_jv < v} j c_{u,s_jv}^w = \sum_{s_kw> w} k c_{u,v}^{s_k w}. 
\end{equation}
Therefore, subtracting Equation~\eqref{eq:notstable} from Equation~\eqref{eq:stable} yields
\begin{align*}
	\sum_{s_iu < u}  c_{s_iu,v}^w + \sum_{s_jv < v}  c_{u,s_jv}^w & = c_{1\times u,1\times v}^{s_1(1\times w)}+\sum_{s_kw> w}  c_{u,v}^{s_k w}. \qedhere
\end{align*}
\end{proof}

\begin{remark}
	The stabilization argument in the proof of Proposition~\ref{prop:stable} is essentially equivalent to the use of the operator $\xi$ developed in \cite{Nenashev}.
\end{remark}

In many cases, one can see that the extra term on the right side of Proposition~\ref{prop:stable} is in fact $0$. In these situations, Proposition~\ref{prop:stable} becomes identical to Proposition~\ref{prop:wlr}, but \emph{with the coefficients dropped}, yielding two  linear relations among the same structure coefficients. In most cases, these relations are linearly independent.

One can also do analogous analysis in $K$-theory; we omit the details, since we will not use the $K$-theoretic analogue in what follows.

\subsection{Iterations of differential operators}\label{sec:iteration}

Iterating the application of $\nabla$ allows us to obtain additional linear relations among Schubert structure coefficients. These linear relations are somewhat more complicated to state, but the proof is analogous to that of Proposition~\ref{prop:wlr}.

Let $\mathfrak{S}_w(\mathbf 1)$ denote the specialization of the Schubert polynomial $\mathfrak{S}_w$ obtained by setting all variables equal to $1$. For $w$ with $\ell(w) = k$, a \newword{reduced word} for $w$ is a sequence $(a_1, a_2, \dots, a_k)$ of positive integers such that $w = s_{a_1} s_{a_2} \cdots s_{a_k}$. Let $R(w)$ denote the set of all reduced words for $w$. The Macdonald reduced word identity \cite[Eq.~(6.11)]{Macdonald:notes} is the following.

\begin{proposition}[{\cite[Eq.~(6.11)]{Macdonald:notes}}]\label{prop:Macdonald}
	Let $w \in S_+$ have $\ell(w) = k$. Then
	\[
	k! \mathfrak{S}_w(\mathbf 1) = \sum_{a \in R(w)} a_1 a_2 \cdots a_k.
	\]
\end{proposition}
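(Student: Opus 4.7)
The plan is to compute $\nabla^k \mathfrak{S}_w$ in two different ways and compare. On the one hand, I will iterate Proposition~\ref{prop:HPSW} to expand $\nabla^k \mathfrak{S}_w$ as a sum indexed by reduced words of $w$; on the other hand, I will use the multinomial theorem to evaluate $\nabla^k$ on an arbitrary homogeneous polynomial of degree $k$.

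First I would iterate Proposition~\ref{prop:HPSW}: applying $\nabla$ once yields $\nabla \mathfrak{S}_w = \sum_{s_{a_1} w < w} a_1 \mathfrak{S}_{s_{a_1} w}$. For each summand, $s_{a_1} w$ has length $k-1$, so I can apply $\nabla$ again to each $\mathfrak{S}_{s_{a_1} w}$, producing a sum indexed by pairs $(a_1, a_2)$ with $s_{a_1} w < w$ and $s_{a_2} s_{a_1} w < s_{a_1} w$. After $k$ iterations, I obtain
\[
\nabla^k \mathfrak{S}_w \;=\; \sum_{(a_1, \dots, a_k)} a_1 a_2 \cdots a_k \cdot \mathfrak{S}_e,
\]
where the sum ranges over all sequences $(a_1, \dots, a_k)$ of positive integers such that each partial product $s_{a_j} s_{a_{j-1}} \cdots s_{a_1} w$ is strictly shorter than the previous one. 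Such sequences are in bijection with reduced words for $w$ itself: if $w = s_{a_1} s_{a_2} \cdots s_{a_k}$ is a reduced expression, then $s_{a_1} w = s_{a_2} \cdots s_{a_k}$ (which is reduced of length $k-1$), so the left descents peeled off are exactly the letters of some element of $R(w)$. Since $\mathfrak{S}_e = 1$, I conclude
\[
\nabla^k \mathfrak{S}_w \;=\; \sum_{a \in R(w)} a_1 a_2 \cdots a_k.
\]

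For the second evaluation, I would observe that $\mathfrak{S}_w$ is homogeneous of degree $k = \ell(w)$ and establish the general fact that $\nabla^k f = k!\, f(\mathbf 1)$ for any homogeneous polynomial $f$ of degree $k$. By linearity, it suffices to check this on a monomial $x^\beta$ with $|\beta| = k$. Expanding $\nabla^k = (\partial_1 + \partial_2 + \cdots)^k$ via the multinomial theorem, only the term $\tfrac{k!}{\beta!}\partial^\beta$ contributes nonzero when applied to $x^\beta$, and $\partial^\beta x^\beta = \beta!$, so $\nabla^k x^\beta = k!$, which matches $k!\, x^\beta(\mathbf 1) = k!$. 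Combining the two evaluations of $\nabla^k \mathfrak{S}_w$ gives $k!\, \mathfrak{S}_w(\mathbf 1) = \sum_{a \in R(w)} a_1 a_2 \cdots a_k$, as desired.

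I expect the main subtlety is not any computation but the bookkeeping in the iteration: one must verify that the condition $s_{a_j} \cdots s_{a_1} w < s_{a_{j-1}} \cdots s_{a_1} w$ at every step is equivalent to $(a_1, \dots, a_k)$ being a reduced word for $w$, and that no other sequences contribute. This follows from the standard fact that a product $s_{b_1} \cdots s_{b_k}$ has length $k$ (equivalently, each left multiplication strictly decreases length when read right to left) if and only if the word is reduced, but it deserves an explicit sentence in the write-up.
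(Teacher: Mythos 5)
Your proof is correct. Note that the paper does not prove this statement at all: it is quoted directly from Macdonald's notes (Eq.~(6.11)) and used as a black box, so there is no internal proof to compare against. Your derivation---iterating Proposition~\ref{prop:HPSW} to get $\nabla^k \mathfrak{S}_w = \sum_{a \in R(w)} a_1 \cdots a_k$, then evaluating $\nabla^k$ on a homogeneous degree-$k$ polynomial via the multinomial theorem to get $k!\,\mathfrak{S}_w(\mathbf 1)$---is sound, and it is in fact the known alternative proof of Macdonald's identity observed in the Hamaker--Pechenik--Speyer--Weigandt paper from which Proposition~\ref{prop:HPSW} is taken; since that proposition is proved there without appeal to Macdonald's identity, there is no circularity in using it here. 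The bookkeeping point you flag is handled correctly: a sequence $(a_1,\dots,a_k)$ survives the iteration exactly when $s_{a_k}\cdots s_{a_1} w = e$ with the length dropping by one at each step, which is equivalent to $w = s_{a_1}\cdots s_{a_k}$ being reduced, i.e., to $(a_1,\dots,a_k) \in R(w)$. It is worth remarking that your first computation is precisely the $\hat{\pi} = e$ case of Equation~\eqref{eq:partial_macdonald} in the paper, where the authors run the logic in the opposite direction (using Macdonald's identity to rewrite the iterated $\nabla$); your argument shows that the identity itself comes for free from Proposition~\ref{prop:HPSW} together with the elementary fact $\nabla^{\deg f} f = (\deg f)!\, f(\mathbf 1)$ for homogeneous $f$.
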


We can use the Macdonald reduced word identity to obtain more linear relations.

\begin{proposition}
	\label{prop:morederivatives}
Let $u,v\in S_+$ and fix $1\leq k\leq \ell(u)+\ell(v)$.  Let $w\in S_+$ with \[
\ell(w)=\ell(u)+\ell(v)-k.
\]
 Then,
\begin{equation}\label{eq:morederivatives}
	\sum_{\substack{\hat{w}\geq_Lw \\ \ell(\hat{w})=\ell(w)+k}} \!\! c_{u,v}^{\hat{w}}\cdot \mathfrak S_{\hat{w}w^{-1}}(\mathbf 1)=\sum_{i=0}^{k} \sum_{\substack{\hat{u}\leq_L u \\ \ell(\hat{u})=\ell(u)-i}} \;\;\sum_{\substack{\hat{v} \leq_L v \\ \ell(\hat{v})=\ell(v)-(k-i)}} \!\!\!\! c_{\hat{u},\hat{v}}^{w}\cdot\mathfrak S_{u\hat{u}^{-1}}(\mathbf 1)  \cdot \mathfrak S_{v\hat{v}^{-1}}(\mathbf 1).
\end{equation}
\end{proposition}
\begin{proof}
	First note that
		\begin{equation*}
		\nabla^k(\mathfrak S_\pi)=\sum_{\substack{\hat{\pi}\leq_L \pi \\ \ell(\hat{\pi})=\ell(\pi)-k}}  \left(\sum_{a \in R(\pi \hat{\pi}^{-1})} a_1a_2 \cdots a_k \right)\cdot  \mathfrak S_{\hat{\pi}}
	\end{equation*}
	by Proposition~\ref{prop:HPSW}.
By Proposition~\ref{prop:Macdonald}, we can replace the inner summation to obtain
	\begin{equation}\label{eq:partial_macdonald}
		\nabla^k(\mathfrak S_\pi)=\sum_{\substack{\hat{\pi}\leq_L \pi \\ \ell(\hat{\pi})=\ell(\pi)-k}}  k!\mathfrak S_{\pi \hat{\pi}^{-1}}(\mathbf 1)\cdot  \mathfrak S_{\hat{\pi}}.
	\end{equation}
	
	Write 
	\[
	\sum_{\hat{p}}c_{u,v}^{\hat{p}} \mathfrak S_{\hat{p}}=\mathfrak S_u \cdot \mathfrak S_v.
	\]
	Applying $\nabla^k$ to both sides, we obtain
	\begin{equation*}
		\nabla^k\left(\sum_{\hat{p}} c_{u,v}^{\hat{p}} \mathfrak S_{\hat{p}} \right)=\nabla^k(\mathfrak S_u\cdot \mathfrak S_v)
		=\sum_{i=0}^{k} \binom{k}{i} \nabla^i(\mathfrak S_u)\nabla^{k-i}(\mathfrak S_v).
	\end{equation*}
Thus, Equation~\eqref{eq:partial_macdonald} yields
\begin{align*}
	\sum_{\hat{p}} c_{u,v}^{\hat{p}} & \sum_{\substack{p\leq_L \hat{p}  \\ \ell(p)=\ell(\hat{p})-k}}  k!\mathfrak S_{\hat{p}p^{-1}}(\mathbf 1) \mathfrak S_p
	\\
	&=\sum_{i=0}^{k} \binom{k}{i} \left(\sum_{\substack{\hat{u} \leq_Lu \\ \ell(\hat{u})=\ell(u)-i}}  \! i!\mathfrak S_{u\hat{u}^{-1}}(\mathbf 1) \mathfrak S_{\hat{u}}\right) \left (\sum_{\substack{\hat{v}\leq_L v \\ \ell(\hat{v})=\ell(v)-(k-i)}} \!\! \!(k-i)!\mathfrak S_{v\hat{v}^{-1}}(\mathbf 1) \mathfrak S_{\hat{v}} \right )\\
	&=\sum_{i=0}^{k} k! \sum_{\substack{\hat{u}\leq_L u \\ \ell(\hat{u})=\ell(u)-i}} \:\;\;  \sum_{\substack{\hat{v} \leq_L v \\ \ell(\hat{v})= \ell(v)-(k-i)}}  \mathfrak S_{u\hat{u}^{-1}}(\mathbf 1) \mathfrak S_{v\hat{v}^{-1}}(\mathbf 1)\sum_q c_{\hat{u},\hat{v}}^q \mathfrak S_q.
\end{align*}
Extract the coefficient of $\mathfrak S_w$ from each side to obtain
\begin{align*}
	 k! \!\!\!\!\sum_{\substack{\hat{w}\geq_Lw \\ \ell(\hat{w})=\ell(w)+k}} \!\!\! c_{u,v}^{\hat{w}} \mathfrak S_{\hat{w}w^{-1}}(\mathbf 1)= k! \sum_{i=0}^{k}  \sum_{\substack{\hat{u}\leq_Lu \\ \ell(\hat{u})=\ell(u)-i}} \;\;\;\sum_{\substack{\hat{v}\leq_Lv \\ \ell(\hat{v})=\ell(v)-(k-i)}} \!\!\!\! c_{\hat{u},\hat{v}}^{w}\mathfrak S_{u\hat{u}^{-1}}(\mathbf 1)   \mathfrak S_{v\hat{v}^{-1}}(\mathbf 1).
\end{align*}
The proposition follows by dividing out $k!$.
\end{proof}

Proposition~\ref{prop:wlr} may alternatively be proved as a corollary to Proposition~\ref{prop:morederivatives} by setting $k=1$. On the other extreme, we have the following corollary. 
Define
\[
\delta(w,i) = \begin{cases}
	1, & \text{if $i \in \Des(w)$;} \\ 
	0, & \text{if $i \notin \Des(w)$.}
\end{cases}
\]

\begin{corollary}\label{cor:Kronecker}
Let $u,v\in S_+$ and fix $i \in \mathbb{Z}_{>0}$.  Then,
\begin{equation}\label{eq:Kronecker}
\sum_{p: i \in \Des(p)} c_{u,v}^{p}\mathfrak S_{ps_i}(\mathbf 1)=\delta(v,i)\mathfrak S_u(\mathbf 1)\mathfrak S_{vs_i}(\mathbf 1)+\delta(u,i)\mathfrak S_{us_i}(\mathbf 1)\mathfrak S_{v}(\mathbf 1).
\end{equation}
\end{corollary}
\begin{proof}
	 In Proposition~\ref{prop:morederivatives}, take $w=s_i$ and  $k = \ell(u) + \ell(v) - 1$. The left side of Equation~\eqref{eq:morederivatives} becomes the left side of Equation~\eqref{eq:Kronecker}. Most of the terms on the right side of Equation~\eqref{eq:morederivatives} vanish by degree considerations.  This leaves only the terms on the right side of Equation~\eqref{eq:Kronecker}.
\end{proof}

The following was observed as \cite[Lemma~1.1]{Knutson:cycling}, where the phenomenon was referred to as \emph{dc-triviality}. We obtain another easy proof of this fact.

\begin{corollary}[{\cite[Lemma~1.1]{Knutson:cycling}}]
Let $u,v\in S_+$ and suppose $i \notin \Des(u) \cup \Des(v)$.
Then, $c_{u,v}^p = 0$ for all $p \in S_+$ with $i \in \Des(p)$.
\end{corollary}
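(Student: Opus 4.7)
The plan is to derive the corollary as an immediate consequence of Corollary~\ref{cor:Kronecker}. Specialize that identity to the given $u$, $v$, and $i$. By hypothesis, $i \notin \Des(u)$ and $i \notin \Des(v)$, so $\delta(u,i) = \delta(v,i) = 0$, and the entire right side of Corollary~\ref{cor:Kronecker} vanishes. Thus
\[
\sum_{p:\, i \in \Des(p)} c_{u,v}^{p}\, \mathfrak{S}_{ps_i}(\mathbf 1) = 0.
\]

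Now I would invoke two positivity facts to conclude that every summand vanishes individually. First, the Schubert structure coefficients $c_{u,v}^{p}$ are nonnegative integers, as recalled in the introduction of the paper. Second, for any permutation $\pi$, the specialization $\mathfrak{S}_{\pi}(\mathbf 1)$ is a positive integer: this is immediate from the Macdonald reduced word identity (Proposition~\ref{prop:Macdonald}), since $R(\pi)$ is nonempty and each product $a_1 a_2 \cdots a_{\ell(\pi)}$ is a positive integer (and for $\pi = e$ we trivially have $\mathfrak{S}_e(\mathbf 1) = 1$). Applied to $\pi = ps_i$ for each $p$ with $i \in \Des(p)$, this gives $\mathfrak{S}_{ps_i}(\mathbf 1) > 0$. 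A sum of nonnegative terms with positive weights that vanishes must be termwise zero, so $c_{u,v}^{p} = 0$ for every $p \in S_\infty$ with $i \in \Des(p)$.

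There is essentially no obstacle here beyond recording the two positivity inputs; the bulk of the work has already been done in establishing Proposition~\ref{prop:morederivatives} and its specialization Corollary~\ref{cor:Kronecker}. The only subtlety worth flagging is that $u$, $v$, $p$ may formally lie in $S_\infty$ rather than a fixed $S_n$, but nonnegativity of $c_{u,v}^p$ and positivity of $\mathfrak{S}_{ps_i}(\mathbf 1)$ are stable under the inclusion $\iota$, so we may work inside any sufficiently large $S_n$ containing $u$, $v$, and $p$ without loss.
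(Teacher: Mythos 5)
Your proof is correct and follows essentially the same route as the paper: specialize Corollary~\ref{cor:Kronecker}, note both right-hand terms vanish since $\delta(u,i)=\delta(v,i)=0$, and use nonnegativity of the $c_{u,v}^p$ together with strict positivity of the specializations $\mathfrak{S}_{ps_i}(\mathbf 1)$ to kill every summand. The only difference is that you spell out the positivity of $\mathfrak{S}_{\pi}(\mathbf 1)$ via Proposition~\ref{prop:Macdonald} and the stability under $\iota$, which the paper simply asserts.
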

\begin{proof}
	In Corollary~\ref{cor:Kronecker}, both terms on the right side vanish under these hypotheses. The left side is a sum of nonnegative integers, so all terms on the left side also vanish. Since each specialization $\mathfrak{S}_p(\mathbf 1)$ is strictly positive, all the relevant $c_{u,v}^p$ equal zero.
\end{proof}

\section{Discussion and proof of \texorpdfstring{Theorem~\ref{thm:conj2}}{Theorem 1.2}}\label{sec:proof12}
For a permutation $w \in S_+$ and positive integer $c$, we define $\tau_c^{-1}(w)$ to be $s_{c} \cdots s_2s_1(1 \times w)$. 
For a permutation $w \in S_+$, we define the \newword{truncation} $\tau(w)$ to be the permutation such that $\tau_{c_1(w)}^{-1}(\tau(w)) = w$.
Note that $\tau(w)$ is the unique element of $S_+$ such that 
\[
c_i(\tau(w)) = 
\begin{cases}
	c_{i+1}(w), & \text{if } i>0; \\
	0, & \text{otherwise}.
\end{cases}
\]

The following lemmas are closely related to ideas of \cite{Bergeron.Sottile,Lenart.Robinson.Sottile} and are known to experts; we include (sketches of) proofs for completeness.

\begin{lemma}\label{lem:1}
	For any $w \in S_n$, we have
	\[
	\mathfrak{S}_w(1, x_1, x_2, \dots, x_{n-1}) = \mathfrak{S}_{\tau(w)}(x_1, x_2, \dots, x_n) + \text{{\rm lower degree terms}}.
	\]
\end{lemma}
\begin{proof}
	The Schubert polynomial $\mathfrak{S}_w$ can be written as a generating function for diagrams $P$ called \emph{pipe dreams} (cf.\ \cite{Bergeron.Billey, Knutson.Miller}), where each $\plus$ in row $i$ of $P$ contributes the variable $x_i$ to the weight of $P$. Under the specialization of the lemma, the highest-degree terms of  $\mathfrak{S}_w(1, x_1, x_2, \dots, x_{n-1})$ come from pipe dreams with as few $\plus$'s as possible in the first row.
	
	The \emph{ladder moves} of \cite{Bergeron.Billey} describe a recursive algorithm to generate all pipe dreams for $w$. From this algorithm, it is straightforward that the pipe dreams for $w$ with a minimum number of $\plus$'s in the first row are identical to the pipe dreams for $\tau(w)$ after deleting their first row and shifting up. 
\end{proof}

\begin{lemma}\label{lem:2}
	Let $u,v,w \in S_n$ such that $c_1(w) = c_1(u) + c_1(v)$. Then we have
	\[
	c_{u,v}^w = c_{\tau(u), \tau(v)}^{\tau(w)}.
	\]
\end{lemma}
\begin{proof}
	Write 
	\[
	\mathfrak{S}_u \mathfrak{S}_v = \sum_a c_{u,v}^a \mathfrak{S}_a.
	\] Choose $m$ sufficiently large so that $a \in S_m$ for all $a$ such that $c_{u,v}^a \neq 0$.
	We may specialize all of the variables in this equation to obtain
	\[
	\mathfrak{S}_u(1, x_1, \dots, x_{m-1}) \mathfrak{S}_v(1, x_1, \dots, x_{m-1}) = \sum_a c_{u,v}^a \mathfrak{S}_a(1, x_1, \dots, x_{m-1}).
	\]
	
	Now, by Lemma~\ref{lem:1} applied to all of these Schubert polynomials, we find that
\begin{equation}\label{eq:trick}
	(\mathfrak{S}_{\tau(u)} +f)(\mathfrak{S}_{\tau(v)}+g) = \sum_a c_{u,v}^a (\mathfrak{S}_{\tau(a)} + h_a),
\end{equation}
	where $\deg f < \ell(\tau(u))$, $\deg g < \ell(\tau(v))$ and $\deg h_a < \ell(\tau(a))$ for each $a$. Now observe that $\ell(\tau(a)) = \ell(\tau(u)) + \ell(\tau(v))$ if and only if we have $c_1(a) = c_1(u) + c_1(v)$.
	Therefore, by extracting the top-degree terms on both sides of Equation~\eqref{eq:trick}, we obtain
\begin{equation}\label{eq:first_expand}
	\mathfrak{S}_{\tau(u)} \mathfrak{S}_{\tau(v)} = \sum_{c_1(b) = c_1(u) + c_1(v)} c_{u,v}^b \mathfrak{S}_{\tau(b)}.
\end{equation}
	On the other hand, by definition,
\begin{equation}\label{eq:second_expand}
	\mathfrak{S}_{\tau(u)} \mathfrak{S}_{\tau(v)} = \sum_d c_{\tau(u), \tau(v)}^d \mathfrak{S}_d.
\end{equation}
Now, observe that if $\tau(b_1) = \tau(b_2)$ and $c_1(b_1) = c_1(b_2)$, then necessarily $b_1 = b_2$.
	Therefore, comparing Equations~\eqref{eq:first_expand} and~\eqref{eq:second_expand} yields
	\[
	c_{\tau(u), \tau(v)}^{\tau(w)} = c_{u,v}^w,
	\]
	as desired.
\end{proof}

Note that Lemma~\ref{lem:easy} is a special case of Lemma~\ref{lem:2}. With these lemmas in hand, we are now prepared to finish the proof of Theorem~\ref{thm:conj2}.

\begin{proof}[Proof of Theorem~\ref{thm:conj2}]
Recall that $u,v,w \in S_+$ are permutations, where
	 $u$ is $p$-inverse Grassmannian and $v$ is $q$-inverse Grassmannian. Note that this implies that $p,q > 0$. Let $n = p+q$. We consider a few cases in turn.

We can assume that $\ell(w) = \ell(u) + \ell(v) - 1$ because otherwise we certainly have $c_{s_pu, v}^w = 0$.

	\medskip
		\noindent
	{\sf (Case 1: $p\neq q$):} We first consider a technical special case, which we will then be able to extend.
	
	\medskip
		\noindent
	{\sf (Case 1.1: 
	$c_{1 \times u, 1 \times v}^{s_1(1 \times w)} = 0$):} In this case, we have by Proposition~\ref{prop:stable} that
\begin{equation}\label{eq:zeroed}
	c_{s_p u, v}^w + c_{u, s_q v}^w = \sum_{s_k w > w} c_{u,v}^{s_k w}
\end{equation}
and from Proposition~\ref{prop:wlr} that
\begin{equation}\label{eq:zeroed2}
	pc_{s_p u, v}^w + qc_{u, s_q v}^w = \sum_{s_k w > w} kc_{u,v}^{s_k w}.
\end{equation}
Thus, multiplying Equation~\eqref{eq:zeroed} by $q$ and subtracting from Equation~\eqref{eq:zeroed2}, we find that
\begin{equation}\label{eq:zeroed3}
	(p-q)c_{s_p u, v}^w  = \sum_{s_k w > w} (k-q)c_{u,v}^{s_k w}.
\end{equation} 

\medskip
	\noindent
	{\sf (Case 1.1.1: $T_w \cdot \gamma_{u,v}$ is not almost rainbow):} 	
By Theorem~\ref{thm:conj1}, $c_{u,v}^{s_k w}=0$ for all $k$ such that $s_k w > w$. Therefore, the right side of Equation~\eqref{eq:zeroed3} is $0$. Since $p\neq q$, this implies that 
	$c_{s_p u, v}^w = 0$. 

\medskip
	\noindent
	{\sf (Case 1.1.2: $T_w \cdot \gamma_{u,v}$ is almost rainbow):} 
Write $\delta = T_w \cdot \gamma_{u,v}$. We now break into cases according to what sort of almost rainbow clan $\delta$ is. Observe that $\Omega_{p,q}$ has a nonzero number of signed unmatched numbers. If $p < q$, these signs are all $\minus$ and appear in positions $p+1, \dots, q$; if $p>q$, these signs are all $\plus$ and appear in positions $q+1, \dots, p$.

\medskip
	\noindent
	{\sf (Case 1.1.2.1: $T_{s_q} \cdot \delta = \Omega_{p,q}$):} 
	We observe that $T_{s_q}$ must act on $\delta$ by moving a sign inside an arc (as in the $T_1$ or $T_6$ arrow of Figure~\ref{fig:heck_action}).
	Therefore, we have $T_{s_r} \cdot \delta = \delta$ for all $r \neq q$. 
	So, in this case, Equation~\eqref{eq:zeroed3} simplifies to 
	\[
	(p-q)c_{s_p u, v}^w  = (q-q)c_{u,v}^{s_q w} = 0.
	\] Since $p \neq q$, we therefore have 
	 $c_{s_p u, v}^w =0$.
		
	\medskip
	\noindent
	{\sf (Case 1.1.2.2: $T_{s_p} \cdot \delta = \Omega_{p,q}$):} 
	We observe that $T_{s_p}$ must again act on $\delta$ by moving a sign inside an arc, as in the previous case.
	Therefore, we have $T_{s_r} \cdot \delta = \delta$ for all $r \neq p$. 
	So, in this case, Equation~\eqref{eq:zeroed3} simplifies to 
	\[
	(p-q)c_{s_p u, v}^w  = (p-q)c_{u,v}^{s_p w} .
	\] Since $p \neq q$, we therefore have 
	 $c_{s_p u, v}^w  = c_{u,v}^{s_p w} =1$, where the last equality is by Theorem~\ref{thm:conj1}.
	 
	\medskip
	\noindent
	{\sf (Case 1.1.2.3: $T_{s_r} \cdot \delta = \Omega_{p,q}$ for some $r \notin \{p,q \}$):}  In this case, $T_{s_r}$ must act on $\delta$ by uncrossing a pair of adjacent arcs (as in the $T_2$ arrow of Figure~\ref{fig:heck_action}). 
	Recall that $n-r$ and $n-r+1$ are the labels on the other ends of the crossing arcs from $r, r+1$.
	
	\medskip
	\noindent
	{\sf (Case 1.1.2.3.1: $n-r > 0$):} 	
In this case, we also have $T_{s_{n-r}} \cdot \delta = \Omega_{p,q}$. Observe that $r \neq n-r$. Moreover, we have $T_{s_{t}} \cdot \delta =  \delta$ for all $t \notin \{ r, n-r\}$. By Theorem~\ref{thm:conj1},
	\[
	c_{u,v}^{s_r w} = 1 = c_{u,v}^{s_{n-r} w}.
	\]
	Now, Equation~\eqref{eq:zeroed3} simplifies to \[
	(p-q)c_{s_p u, v}^w  =  (r-q)c_{u,v}^{s_r w} + (n-r-q)c_{u,v}^{s_{n-r} w} = (r-q) + (n-r-q) = n-2q = p-q.
	\]
	Thus, $c_{s_p u, v}^w=1$.
	
		\medskip
	\noindent
	{\sf (Case 1.1.2.3.2: $n-r \leq 0$):} 	
	Define $\tilde{u} = 1^{r-n+1} \times u$, $\tilde{v} = 1^{r-n+1} \times v$, and $\tilde{w} = 1^{r-n+1} \times w$. Then $\tilde{u}$ is $\tilde{p}$-inverse Grassmannian, where $\tilde{p} = p +r - n + 1$.
	Also let $\tilde{\gamma} = \gamma_{\tilde{u}, \tilde{v}}$ and notice that $\tilde{\gamma}$ is a horizontal shift of the backstable clan $\gamma_{u,v}$. Therefore, $T_{\tilde{w}} \cdot \tilde{\gamma}$ is a horizontal shift of $T_w \cdot \gamma_{u,v} = \delta$. In particular, $T_{\tilde{w}} \cdot \tilde{\gamma}$ is almost rainbow with a pair of crossing arcs. Observe that $s_{\tilde{p}} \tilde{u} = 1^{r-n+1} \times s_p u$. By the previous {\sf Case 1.1.2.3.1}, 
	\[
	c_{s_{\tilde{p}} \tilde{u}, \tilde{v}}^{\tilde{w}} = c_{1^{r-n+1} \times s_p u, 1^{r-n+1} \times v}^{1^{r-n+1} \times w} = 1.
	\] 
	Now, Lemma~\ref{lem:easy} gives that 
	\[
	c_{1^{r-n+1} \times s_p u, 1^{r-n+1} \times v}^{1^{r-n+1} \times w} = c_{s_p u, v}^w, 
	\]
	so $c_{s_p u, v}^w = 1$, as desired.

	\medskip
	\noindent
	{\sf (Case 1.2: $c_{1 \times u, 1 \times v}^{s_1(1 \times w)}\neq 0$):} 
	Define $\tilde{u} = 1 \times u$, $\tilde{v} = 1 \times v$, and $\tilde{w} = 1 \times w$. Then $\tilde{u}$ is $\tilde{p}$-inverse Grassmannian and $\tilde{v}$ is $\tilde{q}$-inverse Grassmannian, where $\tilde{p} = p + 1$ and $\tilde{q} = q+1$.
	Also let $\tilde{\gamma} = \gamma_{\tilde{u}, \tilde{v}}$ and notice that $\tilde{\gamma}$ is a horizontal shift of $\gamma_{u,v}$. 
	
	By Proposition~\ref{prop:stable}, we have 
	\begin{equation}\label{eq:unshifted}
	c_{s_p u, v}^w + c_{u, s_q v}^w = c_{\tilde{u}, \tilde{v}}^{s_1\tilde{w}} + \sum_{s_k w > w} c_{u,v}^{s_k w}
	\end{equation}
	and 
	\begin{equation}\label{eq:shifted}
	c_{s_{\tilde{p}} \tilde{u}, \tilde{v}}^{\tilde{w}} + c_{\tilde{u}, s_{\tilde{q}} \tilde{v}}^{\tilde{w}} = c_{1 \times \tilde{u}, 1\times  \tilde{v}}^{s_1(1 \times \tilde{w})} + \sum_{s_h \tilde{w} > \tilde{w}} c_{\tilde{u},\tilde{v}}^{s_h \tilde{w}}.
	\end{equation}

By Lemma~\ref{lem:easy}, we have 
\[
c_{s_p u, v}^w = c_{s_{\tilde{p}} \tilde{u}, \tilde{v}}^{\tilde{w}}, \quad c_{u, s_q v}^w = c_{\tilde{u}, s_{\tilde{q}} \tilde{v}}^{\tilde{w}}, \quad \text{and } c_{\tilde{u}, \tilde{v}}^{s_1\tilde{w}} + \sum_{s_k w > w} c_{u,v}^{s_k w} = \sum_{s_h \tilde{w} > \tilde{w}} c_{\tilde{u},\tilde{v}}^{s_h \tilde{w}}.
\] Thus, subtracting Equation~\eqref{eq:unshifted} from Equation~\eqref{eq:shifted} yields that
$c_{1 \times \tilde{u}, 1 \times \tilde{v}}^{s_1(1 \times \tilde{w})} = 0$.

		Now, $T_{\tilde{w}} \cdot \tilde{\gamma}$ is a horizontal shift of $T_w \cdot \gamma_{u,v}$. Observe that $s_{\tilde{p}} \tilde{u} = 1 \times s_p u$. 
	Since Lemma~\ref{lem:easy} gives that 
$c_{s_p u, v}^w = c_{s_{\tilde{p}} \tilde{u}, \tilde{v}}^{\tilde{w}}$ and the latter coefficient falls under {\sf Case 1.1}, we are done.

	\medskip
	\noindent
	{\sf (Case 2: $p = q$):}
	We establish this case by reduction to {\sf Case 1} via stabilization. Choose an interval $[i,j]$ on which $\gamma_{u,v}$ is supported.
	
	\medskip
	\noindent
	{\sf (Case 2.1: $i\geq 1$):}
	If $i >1$, expand the interval $[i,j]$ until $i=1$.
	
	Define $\tilde{u} = 1 \times u$, $\tilde{v} = \tau_p^{-1}(v)$, and $\tilde{w} = \tau_p^{-1}(w)$. 
	Then $\tilde{u}$ is $\tilde{p}$-inverse Grassmannian, where $\tilde{p} = p+1$. On the other hand, $\tilde{v}$ is $p$-inverse Grassmannian.
	
	By Lemma~\ref{lem:2}, we have 
	\[
	c_{1 \times (s_p u), \tilde{v}}^{\tilde{w}} = c_{\tau(1 \times (s_p u)), \tau(\tilde{v})}^{\tau(\tilde{w})} = c_{s_p u, v}^w.
	\]
	But also $1 \times (s_p u) = s_{\tilde{p}} \tilde{u}$, so 
	\begin{equation}\label{eq:key_equivalence}
	c_{s_{\tilde{p}} \tilde{u}, \tilde{v}}^{\tilde{w}} = c_{s_p u, v}^w.
	\end{equation}
Since $\tilde{p} \neq p$, the coefficient $c_{s_{\tilde{p}} \tilde{u}, \tilde{v}}^{\tilde{w}}$ falls under {\sf Case 1}. Note also that $\ell(\tilde{w}) = \ell(s_{\tilde{p}} \tilde{u}) + \ell(\tilde{v})$.

Let $\tilde{\gamma} = \gamma_{\tilde{u}, \tilde{v}}$. Notice that $\tilde{\gamma}$ is supported on $[1,j+1]$ and
 is obtained from $\gamma_{u,v}$ by placing a $\plus$ in position $1$ and shifting the rest of $\gamma_{u,v}$ horizontally one space to the right. That is,
 \[
 \tilde{\gamma}(z) = 
 \begin{cases}
 	\plus, & \text{if } z = 1;\\
 	\gamma_{u,v}(z-1) & \text{if } z > 1.
 \end{cases}
\]
See Example~\ref{ex:plus_shifty} for an illustration of this construction.

Let $\delta = T_{w} \cdot \gamma_{u,v}$ and let $\hat{\delta} = T_{1 \times w} \cdot \tilde{\gamma}$. Notice that $\hat{\delta}$ is similarly obtained from $\delta$ by placing a $\plus$ in position $1$ and shifting the rest of $\delta$ horizontally one space to the right.

Now notice that $\tilde{w} =  s_p s_{p-1} \cdots s_1(1\times w)$, so 
\[T_{\tilde{w}} \cdot \tilde{\gamma} = T_p T_{p-1} \cdots T_1 T_{1 \times w} \cdot \tilde{\gamma} = T_p T_{p-1} \cdots T_1 \cdot \hat{\delta}.
\]

	\medskip
	\noindent
	{\sf (Case 2.1.1: $\delta$ is not almost rainbow):} Let $h$ be the least positive integer such that there is a permutation $\theta$ with $\ell(\theta) = h$ and $T_\theta \cdot \delta = \Omega_{p,p}$. Then, it is easy to see that any permutation $\sigma$ with $T_\sigma \cdot \hat{\delta} = \Omega_{\tilde{p},p}$ must have $\ell(\sigma) \geq h + p$. In particular, 
	$T_{\tilde{w}} \cdot  \tilde{\gamma} =  T_p T_{p-1} T_{p-2} \cdots T_1 \cdot \hat{\delta}$ is not almost rainbow. Therefore, by {\sf Case 1}, we have $c_{s_{\tilde{p}} \tilde{u}, \tilde{v}}^{\tilde{w}} = 0$. Therefore, Equation~\eqref{eq:key_equivalence} gives that $c_{s_p u, v}^w = 0$, as desired.

	\medskip
	\noindent
	{\sf (Case 2.1.2: $\delta$ is almost rainbow):} We break into cases according to what type of almost rainbow clan $\delta$ is.
	
	\medskip
	\noindent
	{\sf (Case 2.1.2.1: $T_{s_p} \cdot \delta = \Omega_{p,p}$):} 
	In this case, $T_{s_p}$ must act on $\delta$ by joining a $\plus$ and a $\minus$ into an arc (as in the $T_4$ arrow of Figure~\ref{fig:heck_action}). The action of $T_{p-1} T_{p-2} \cdots T_1$ on $\hat{\delta}$ is then to move another $\plus$ from position $1$ past $p-1$ initial nodes to land in position $p$.

	\medskip
	\noindent
	{\sf (Case 2.1.2.1.1 $\delta(p) = \plus$):}
	Here, $T_{p-1} T_{p-2} \cdots T_1 \cdot \hat{\delta}(p) = \plus$ and $T_{p-1} T_{p-2} \cdots T_1 \cdot \hat{\delta}(p+1) = \plus$. Hence, $T_p T_{p-1} T_{p-2} \cdots T_1 \cdot \hat{\delta} = T_{p-1} T_{p-2} \cdots T_1 \cdot \hat{\delta}$ and, in particular, $T_p T_{p-1} T_{p-2} \cdots T_1 \cdot \hat{\delta}$ is not an almost rainbow clan. Thus, by {\sf Case 1}, we then have $c_{s_{\tilde{p}} \tilde{u}, \tilde{v}}^{\tilde{w}} = 0$. Therefore, Equation~\eqref{eq:key_equivalence} gives that $c_{s_p u, v}^w = 0$, as desired.		
	
	\medskip
	\noindent
	{\sf (Case 2.1.2.1.2 $\delta(p) = \minus$):}
	Here, $T_{p-1} T_{p-2} \cdots T_1 \cdot \hat{\delta}(p) = \plus$ and $T_{p-1} T_{p-2} \cdots T_1 \cdot \hat{\delta}(p+1) = \minus$. Hence, $T_p$ acts on $T_{p-1} T_{p-2} \cdots T_1 \cdot \hat{\delta}$ by joining these $\plus$ and $\minus$ into an arc. So $T_p T_{p-1} T_{p-2} \cdots T_1 \cdot \hat{\delta}$ is an almost rainbow clan in $\Psi_{\tilde{p}, p}$.
	Thus, by {\sf Case 1}, we then have $c_{s_{\tilde{p}} \tilde{u}, \tilde{v}}^{\tilde{w}} = 1$. Therefore, Equation~\eqref{eq:key_equivalence} gives that $c_{s_p u, v}^w = 1$, as desired.		
	
	\medskip
	\noindent
	{\sf (Case 2.1.2.2: $T_{s_r} \cdot \delta = \Omega_{p,p}$ for some $r \neq p$):}
	In this case, $T_{s_r}$ must act on $\delta$ by uncrossing a pair of adjacent arcs. 
	Recall that $n-r$ and $n-r+1$ are the labels on the other ends of the crossing arcs from $r, r+1$.
	
	The action of $T_p T_{p-1} \cdots T_1$ on $\hat{\delta}$ is then to move the $\plus$ from position $1$ past $p$ initial nodes to land in position $p+1$. Thus, $T_{\tilde{w}} \cdot \tilde{\gamma} = T_p T_{p-1} \cdots T_1 \cdot \hat{\delta}$ is an almost rainbow clan in $\Psi_{\tilde{p},p}$. By {\sf Case 1}, we then have $c_{s_{\tilde{p}} \tilde{u}, \tilde{v}}^{\tilde{w}} = 1$. Therefore, Equation~\eqref{eq:key_equivalence} gives that $c_{s_p u, v}^w = 1$, as desired.
	
	\medskip
	\noindent
	{\sf (Case 2.2: $i < 1$):}
		Define $\tilde{u} = 1^{1-i} \times u$, $\tilde{v} = 1^{1-i} \times v$, and $\tilde{w} = 1^{1-i} \times w$. Then $\tilde{u}$ and $\tilde{v}$ are $\tilde{p}$-inverse Grassmannian, where $\tilde{p} = p -i + 1$.
		
	Also let $\tilde{\gamma} = \gamma_{\tilde{u}, \tilde{v}}$ and notice that $\tilde{\gamma}$ is a horizontal shift of $\gamma_{u,v}$. 
	Therefore, $T_{\tilde{w}} \cdot \tilde{\gamma}$ is a horizontal shift of $T_w \cdot \gamma_{u,v} = \delta$. 
	
	 Observe that $s_{\tilde{p}} \tilde{u} = 1^{1-i} \times s_p u$. 
	 Since Lemma~\ref{lem:easy} gives that 
$c_{s_p u, v}^w = c_{s_{\tilde{p}} \tilde{u}, \tilde{v}}^{\tilde{w}}$ and the latter coefficient falls under {\sf Case 2.1}, we are done.
\end{proof}
	
	\begin{example}\label{ex:plus_shifty}
	 We illustrate part of the construction from {\sf Case 2.1}. Let $u = 51236748$ and $v = 12354678$. Here, $p=q=4$. The clan $\gamma_{u,v}$ is 
	  \[
\begin{tikzpicture}
				\foreach \i in {1,...,8}
	{
	\node (\i) at (\i,0) {\i };	
	}
    \draw[thick] (6,0.3) arc
    [
        start angle=0,
        end angle=180,
        x radius=1.5cm,
        y radius =0.5cm
    ] ;
    \draw[thick] (8,0.3) arc
    [
        start angle=0,
        end angle=180,
        x radius=3.0cm,
        y radius =0.7cm
    ] ;
	\node (1lab) at (1,0.4) {$\minus$};
	\node (4lab) at (4,0.4) {$\plus$};
	\node (5lab) at (5,0.4) {$\minus$};
	\node (7lab) at (7,0.4) {$\plus$};
		\end{tikzpicture}.
		\]
		
		Define $\tilde{u} = 1 \times u$ and $\tilde{v} = \tau_p^{-1}(v)$. In this case, $\tilde{u} = 162347859$ and $\tilde{v} = 512364789$.
	Note that $\tilde{u}$ is $5$-inverse Grassmannian, while $\tilde{v}$ is $4$-inverse Grassmannian.

		Let $\tilde{\gamma} = \gamma_{\tilde{u}, \tilde{v}}$, which looks like
			  \[
\begin{tikzpicture}
				\foreach \i in {1,...,9}
	{
	\node (\i) at (\i,0) {\i };	
	}
    \draw[thick] (7,0.3) arc
    [
        start angle=0,
        end angle=180,
        x radius=1.5cm,
        y radius =0.5cm
    ] ;
    \draw[thick] (9,0.3) arc
    [
        start angle=0,
        end angle=180,
        x radius=3.0cm,
        y radius =0.7cm
    ] ;
    \node (0lab) at (1,0.4) {$\plus$};
	\node (1lab) at (2,0.4) {$\minus$};
	\node (4lab) at (5,0.4) {$\plus$};
	\node (5lab) at (6,0.4) {$\minus$};
	\node (7lab) at (8,0.4) {$\plus$};
		\end{tikzpicture}.
		\]
		Notice that $\tilde{\gamma}$
 is obtained from $\gamma_{u,v}$ by placing a $\plus$ in position $1$ and shifting the rest of $\gamma_{u,v}$ to the right, as described in {\sf Case 2.1}. 
	\end{example}

	\begin{example}\label{ex:conj2}
		 Let $u = 3142$. Note that $u$ is $2$-inverse Grassmannian and that $s_2 u = 2143$. We use Theorem~\ref{thm:conj2} to compute the Schubert structure coefficients $c_{2143,3142}^w$ for all $w \in S_+$. 
		 We have that $\gamma_{u,u}$ looks like 
		 	  \[
\begin{tikzpicture}
				\foreach \i in {0,...,5}
	{
	\node (\i) at (\i,0) {\i };	
	}
    \draw[thick] (1,0.3) arc
    [
        start angle=0,
        end angle=180,
        x radius=0.5cm,
        y radius =0.4cm
    ] ;
    \draw[thick] (3,0.3) arc
    [
        start angle=0,
        end angle=180,
        x radius=0.5cm,
        y radius =0.4cm
    ] ;
        \draw[thick] (5,0.3) arc
    [
        start angle=0,
        end angle=180,
        x radius=0.5cm,
        y radius =0.4cm
    ] ;
		\end{tikzpicture}.
		\]
		We consider all nontrivial actions of $0$-Hecke generators $T_i$ on $\gamma_{u,u}$, until reaching an almost rainbow clan:
				\begin{center}
	\scalebox{0.72}{
\begin{tikzpicture}
	\node (N1) at (0,0) {
		\begin{tikzpicture}
				\foreach \i in {0,...,5}
	{
	\node (\i) at (\i,0) {\i };	
	}
    \draw[thick] (1,0.3) arc
    [
        start angle=0,
        end angle=180,
        x radius=0.5cm,
        y radius =0.4cm
    ] ;
    \draw[thick] (3,0.3) arc
    [
        start angle=0,
        end angle=180,
        x radius=0.5cm,
        y radius =0.4cm
    ] ;
        \draw[thick] (5,0.3) arc
    [
        start angle=0,
        end angle=180,
        x radius=0.5cm,
        y radius =0.4cm
    ] ;
		\end{tikzpicture}};
    \node (N2) at (-4,-4) {
			\begin{tikzpicture}
				\foreach \i in {0,...,5}
	{
	\node (\i) at (\i,0) {\i };	
	}
    \draw[thick] (2,0.3) arc
    [
        start angle=0,
        end angle=180,
        x radius=1.0cm,
        y radius =0.5cm
    ] ;
    \draw[thick] (3,0.3) arc
    [
        start angle=0,
        end angle=180,
        x radius=1.0cm,
        y radius =0.5cm
    ] ;
        \draw[thick] (5,0.3) arc
    [
        start angle=0,
        end angle=180,
        x radius=0.5cm,
        y radius =0.4cm
    ] ;
		\end{tikzpicture}};
	\node (N3) at (4,-4) {
			\begin{tikzpicture}
				\foreach \i in {0,...,5}
	{
	\node (\i) at (\i,0) {\i };	
	}
    \draw[thick] (1,0.3) arc
    [
        start angle=0,
        end angle=180,
        x radius=0.5cm,
        y radius =0.4cm
    ] ;
    \draw[thick] (4,0.3) arc
    [
        start angle=0,
        end angle=180,
        x radius=1.0cm,
        y radius =0.5cm
    ] ;
        \draw[thick] (5,0.3) arc
    [
        start angle=0,
        end angle=180,
        x radius=1.0cm,
        y radius =0.5cm
    ] ;
		\end{tikzpicture}};
	\node (N4) at (-8,-8) {
			\begin{tikzpicture}
				\foreach \i in {0,...,5}
	{
	\node (\i) at (\i,0) {\i };	
	}
    \draw[thick] (2,0.3) arc
    [
        start angle=0,
        end angle=180,
        x radius=0.5cm,
        y radius =0.4cm
    ] ;
    \draw[thick] (3,0.3) arc
    [
        start angle=0,
        end angle=180,
        x radius=1.5cm,
        y radius =0.7cm
    ] ;
        \draw[thick] (5,0.3) arc
    [
        start angle=0,
        end angle=180,
        x radius=0.5cm,
        y radius =0.4cm
    ] ;
		\end{tikzpicture}};
		\node (N5) at (0,-8) {
			\begin{tikzpicture}
				\foreach \i in {0,...,5}
	{
	\node (\i) at (\i,0) {\i };	
	}
    \draw[thick] (2,0.3) arc
    [
        start angle=0,
        end angle=180,
        x radius=1.0cm,
        y radius =0.5cm
    ] ;
    \draw[thick] (4,0.3) arc
    [
        start angle=0,
        end angle=180,
        x radius=1.5cm,
        y radius =0.5cm
    ] ;
        \draw[thick] (5,0.3) arc
    [
        start angle=0,
        end angle=180,
        x radius=1.0cm,
        y radius =0.5cm
    ] ;
		\end{tikzpicture}};
		\node (6) at (8,-8) {
			\begin{tikzpicture}
				\foreach \i in {0,...,5}
	{
	\node (\i) at (\i,0) {\i };	
	}
    \draw[thick] (1,0.3) arc
    [
        start angle=0,
        end angle=180,
        x radius=0.5cm,
        y radius =0.4cm
    ] ;
    \draw[thick] (4,0.3) arc
    [
        start angle=0,
        end angle=180,
        x radius=0.5cm,
        y radius =0.4cm
    ] ;
        \draw[thick] (5,0.3) arc
    [
        start angle=0,
        end angle=180,
        x radius=1.5cm,
        y radius =0.7cm
    ] ;
		\end{tikzpicture}};
		\node (7) at (-8,-12) {
			\begin{tikzpicture}
				\foreach \i in {0,...,5}
	{
	\node (\i) at (\i,0) {\i };	
	}
    \draw[thick] (2,0.3) arc
    [
        start angle=0,
        end angle=180,
        x radius=0.5cm,
        y radius =0.4cm
    ] ;
    \draw[thick] (4,0.3) arc
    [
        start angle=0,
        end angle=180,
        x radius=2.0cm,
        y radius =0.7cm
    ] ;
        \draw[thick] (5,0.3) arc
    [
        start angle=0,
        end angle=180,
        x radius=1.0cm,
        y radius =0.5cm
    ] ;
		\end{tikzpicture}};
		\node (7andthreequarters) at (0,-12) {
			\begin{tikzpicture}
				\foreach \i in {0,...,5}
	{
	\node (\i) at (\i,0) {\i };	
	}
    \draw[thick] (3,0.3) arc
    [
        start angle=0,
        end angle=180,
        x radius=1.5cm,
        y radius =0.6cm
    ] ;
    \draw[thick] (4,0.3) arc
    [
        start angle=0,
        end angle=180,
        x radius=1.5cm,
        y radius =0.8cm
    ] ;
        \draw[thick] (5,0.3) arc
    [
        start angle=0,
        end angle=180,
        x radius=1.5cm,
        y radius =0.6cm
    ] ;
		\end{tikzpicture}};
		\node (8) at (8,-12) {
			\begin{tikzpicture}
				\foreach \i in {0,...,5}
	{
	\node (\i) at (\i,0) {\i };	
	}
    \draw[thick] (2,0.3) arc
    [
        start angle=0,
        end angle=180,
        x radius=1.0cm,
        y radius =0.5cm
    ] ;
    \draw[thick] (4,0.3) arc
    [
        start angle=0,
        end angle=180,
        x radius=0.5cm,
        y radius =0.4cm
    ] ;
        \draw[thick] (5,0.3) arc
    [
        start angle=0,
        end angle=180,
        x radius=2.0cm,
        y radius =0.7cm
    ] ;
		\end{tikzpicture}};
		\node (9) at (-8,-16) {
			\begin{tikzpicture}
				\foreach \i in {0,...,5}
	{
	\node (\i) at (\i,0) {\i };	
	}
    \draw[thick] (3,0.3) arc
    [
        start angle=0,
        end angle=180,
        x radius=1.0cm,
        y radius =0.5cm
    ] ;
    \draw[thick] (4,0.3) arc
    [
        start angle=0,
        end angle=180,
        x radius=2.0cm,
        y radius =0.8cm
    ] ;
        \draw[thick] (5,0.3) arc
    [
        start angle=0,
        end angle=180,
        x radius=1.5cm,
        y radius =0.7cm
    ] ;
		\end{tikzpicture}};
		\node (10) at (0,-16) {
			\begin{tikzpicture}
				\foreach \i in {0,...,5}
	{
	\node (\i) at (\i,0) {\i };	
	}
    \draw[thick] (2,0.3) arc
    [
        start angle=0,
        end angle=180,
        x radius=0.5cm,
        y radius =0.4cm
    ] ;
    \draw[thick] (4,0.3) arc
    [
        start angle=0,
        end angle=180,
        x radius=0.5cm,
        y radius =0.4cm
    ] ;
        \draw[thick] (5,0.3) arc
    [
        start angle=0,
        end angle=180,
        x radius=2.5cm,
        y radius =0.8cm
    ] ;
		\end{tikzpicture}};
		\node (11) at (8,-16) {
			\begin{tikzpicture}
				\foreach \i in {0,...,5}
	{
	\node (\i) at (\i,0) {\i };	
	}
    \draw[thick] (3,0.3) arc
    [
        start angle=0,
        end angle=180,
        x radius=1.5cm,
        y radius =0.7cm
    ] ;
    \draw[thick] (4,0.3) arc
    [
        start angle=0,
        end angle=180,
        x radius=1.0cm,
        y radius =0.5cm
    ] ;
        \draw[thick] (5,0.3) arc
    [
        start angle=0,
        end angle=180,
        x radius=2.0cm,
        y radius =0.8cm
    ] ;
		\end{tikzpicture}};
		\node (12) at (-4,-20) {\raisebox{0.6cm}{$\psi_1=$}
			\begin{tikzpicture}
				\foreach \i in {0,...,5}
	{
	\node (\i) at (\i,0) {\i };	
	}
    \draw[thick] (3,0.3) arc
    [
        start angle=0,
        end angle=180,
        x radius=1.0cm,
        y radius =0.5cm
    ] ;
    \draw[thick] (4,0.3) arc
    [
        start angle=0,
        end angle=180,
        x radius=1.0cm,
        y radius =0.5cm
    ] ;
        \draw[thick] (5,0.3) arc
    [
        start angle=0,
        end angle=180,
        x radius=2.5cm,
        y radius =0.8cm
    ] ;
		\end{tikzpicture}};
		\node (13) at (4,-20) {
			\begin{tikzpicture}
				\foreach \i in {0,...,5}
	{
	\node (\i) at (\i,0) {\i };	
	}
    \draw[thick] (3,0.3) arc
    [
        start angle=0,
        end angle=180,
        x radius=0.5cm,
        y radius =0.4cm
    ] ;
    \draw[thick] (4,0.3) arc
    [
        start angle=0,
        end angle=180,
        x radius=2.0cm,
        y radius =0.8cm
    ] ;
        \draw[thick] (5,0.3) arc
    [
        start angle=0,
        end angle=180,
        x radius=2.0cm,
        y radius =0.8cm
    ] ;
		\end{tikzpicture} \raisebox{0.6cm}{$=\psi_2$}};
	\draw[->,thick,darkblue] (N1) -- node[above left]{$T_1$} (N2) ;
	\draw[->,thick,darkblue] (N1) -- node[above right]{$T_3$} (N3) ;
	\draw[->,thick,darkblue] (N2) -- node[above left]{$\textcolor{purple}{T_0,} T_2$} (N4) ;
	\draw[->,thick,darkblue] (N2) -- node[below left]{$T_3$} (N5) ;
	\draw[->,thick,darkblue] (N3) -- node[below right]{$T_1$} (N5) ;
	\draw[->,thick,darkblue] (N3) -- node[above right]{$T_2, T_4$} (6) ;
	\draw[->,thick,darkblue] (N4) -- node[below left]{$T_3$} (7) ;
	\draw[->,thick,purple] (N5) -- node[above left]{$T_0$} (7) ;	
	\draw[->,thick,darkblue] (N5) -- node[above left]{$T_2$} (7andthreequarters) ;		
	\draw[->,thick,darkblue] (N5) -- node[above right]{$T_4$} (8) ;
	\draw[->,thick,darkblue] (6) -- node[below right]{$T_1$} (8) ;
	\draw[->,thick,darkblue] (7) -- node[above left]{$T_2$} (9) ;
	\draw[->,thick,darkblue] (7) -- node[below left, near end]{$T_4$} (10) ;
	\draw[->,thick,purple] (7andthreequarters) -- node[above left, near start]{$T_0$} (9) ;
	\draw[->,thick,darkblue] (7andthreequarters) -- node[above right, near start]{$T_4$} (11) ;
	\draw[->,thick,purple] (8) -- node[below right, near end]{$T_0$} (10) ;
	\draw[->,thick,darkblue] (8) -- node[above right]{$T_2$} (11) ;
	\draw[->,thick,darkblue] (9) -- node[below left]{$T_4$} (12) ;
	\draw[->,thick,darkblue] (9) -- node[below left, near start]{$T_1$} (13) ;
	\draw[->,thick,darkblue] (10) -- node[above left, near start]{$T_2$} (12) ;
	\draw[->,thick,purple] (11) -- node[below right]{$T_0$} (12) ;
	\draw[->,thick,darkblue] (11) -- node[below right]{$T_3$} (13) ;
\end{tikzpicture}}.
	\end{center}
	Here, we have drawn the arrows labeled only by $T_i$ with $i\leq 0$ in \textcolor{purple}{purple} to distinguish them from those that contribute in Theorem~\ref{thm:conj2}.
	
	First, observe that there are only two almost rainbow clans that we can reach, specifically the almost rainbow clans $\psi_1$ and $\psi_2$ at the bottom of the diagram above. Note that $\psi_1, \psi_2 \in \Psi_{2,2}$. Using only $T_i$ with $i>0$, there are exactly two paths from $\gamma_{u,u}$ to $\psi_1$. These paths are labeled by the sequences $(T_1, T_2, T_3, T_2, T_4)$ and $(T_1, T_2, T_3, T_4, T_2)$, both corresponding to the permutation $51324$. Thus, by Theorem~\ref{thm:conj2}, we have computed that $c_{2143,3142}^{51324} = 1$. 
	
	On the other hand, there are six paths from $\gamma_{u,u}$ to the almost rainbow clan $\psi_2$. These six paths are labeled by the sequences $\pi_1 = (T_1, T_3, T_2, T_4, T_3)$, $\pi_2 =(T_1, T_3, T_4, T_2, T_3)$, $\pi_3 = (T_3, T_1, T_2, T_4, T_3)$, $\pi_4 = (T_3, T_1, T_4, T_2, T_3)$, $\pi_5 = (T_3, T_2, T_1, T_2, T_3)$, and $\pi_6 = (T_3, T_4, T_1, T_2, T_3)$. The sequences $\pi_1,$ $\pi_2$, $\pi_3$, $\pi_4$, and $\pi_6$ all yield the permutation $41523$. Thus, by Theorem~\ref{thm:conj2}, we have computed that $c_{2143,3142}^{41523} = 1$. However, $\pi_5$ yields the permutation $4231$, so Theorem~\ref{thm:conj2} also gives $c_{2143,3142}^{4231} = 1$. Since these are the only paths from $\gamma_{u,u}$ to almost rainbow clans in $\Psi_{2,2}$, Theorem~\ref{thm:conj2} finally computes that $c_{2143,3142}^{w} = 0$ for all $w \notin \{51324, 41523, 4231 \}$.
	\end{example}

The fact that the products in Theorems~\ref{thm:conj1} and~\ref{thm:conj2} are multiplicity-free is remarkable. In contrast, for example, the Littlewood--Richardson rule shows that every nonnegative integer appears as a coefficient in some product of Grassmannian Schubert cycles. Indeed, the multiplicity-freeness of Theorems~\ref{thm:conj1} and~\ref{thm:conj2} does not extend to the product of subjacent Schubert cycles with each other, as illustrated by the following example.

\begin{example}
	\label{ex:prodsubjacent}
	Let $u=142536$ and $v=451236$.  Note that $u$ and $v$ are both $3$-inverse Grassmannian.  We have that $s_3u=132546$ and $s_3v=351246$ are subjacent.  Furthermore, we have
	\[\sigma_{132546}\cdot \sigma_{351246}=\sigma_{361425}+\sigma_{451326}+2\sigma_{461235},\]
	which is not multiplicity-free.
\end{example}

\section*{Acknowledgements}
We are grateful for conversations with Zach Hamaker and David E Speyer. Many thanks to Alexander Yong for bringing the reference \cite{Wyser} to our attention. We are also very grateful to Christian Gaetz, Patricia Klein, Allen Knutson, and Frank Sottile for many helpful comments on earlier drafts. In particular, we thank Frank Sottile for pointing out a derivation of Corollary~\ref{cor:monk} from Monk's formula and providing useful remarks on \cite{Bergeron.Sottile, Lenart.Robinson.Sottile}.  We also thank the anonymous referees for many helpful comments and suggestions.

O.P.\ was partially supported by a Mathematical Sciences Postdoctoral Research Fellowship (\#1703696) from the National Science Foundation, as well as by a Discovery Grant (RGPIN-2021-02391) and Launch Supplement (DGECR-2021-00010) from the Natural Sciences and Engineering Research Council of Canada. A.W.\ was partially supported by Bill Fulton's Oscar Zariski Distinguished Professor Chair funds and by National Science Foundation grant DMS-2344764.

\bibliographystyle{amsalphavar} 
\bibliography{WLR.bib}
\end{document}